    \newtheorem{Lem}{Lemma}[section]
    \newtheorem{Lem-Def}[Lem]{Lemma-Definition}
    \newtheorem{Prop}[Lem]{Proposition}
     \newtheorem*{thm*}{Theorem}
    \newtheorem{Thm}[Lem]{Theorem}
\theoremstyle{definition}
    \newtheorem{Def}[Lem]{Definition}
    \newtheorem{Rem}[Lem]{Remark}
\newcommand{\mc}{\mathcal}
\newcommand{\G}{\Gamma}
\newcommand{\tit}{\textit}
\newcommand{\ora}[1]{\overrightarrow{#1}}
\newcommand{\N}{\mathcal N}
\newcommand{\T}{\mathcal T}
\renewcommand{\L}{\mathcal L}
\renewcommand{\O}{\mathcal O}
\newcommand{\C}{\mathcal C}
\newcommand{\D}{\mathcal D}
\newcommand{\R}{\mathbb R}
\renewcommand{\P}{\mathcal{P}}
\newcommand{\col}{\colon}
\newcommand{\Ps}{\mathbb{P}}
\newcommand{\ra}{\rightarrow}
\newcommand{\ol}{\overline}
\newcommand{\supp}{\text{supp}}
\newcommand{\wt}{\widetilde}
\newcommand{\wh}{\widehat}
\newcommand{\dra}{\dashrightarrow}
\newcommand{\J}{\mathcal{J}}
\newcommand{\lra}{\longrightarrow}
\DeclareMathOperator{\Sym}{Sym}
\DeclareMathOperator{\st}{st}
\DeclareMathOperator{\qs}{qs}
\DeclareMathOperator{\Spec}{Spec}
\DeclareMathOperator{\free}{free}
\DeclareMathOperator{\Proj}{Proj}
\newcommand{\trop}{\text{trop}}
\renewcommand{\div}{\textnormal{div}}
\newcommand{\boundellipse}[3]
{(#1) ellipse (#2 and #3)
}
\title[Degree-2 Abel maps and hyperelleptic curves]{Degree-2 Abel maps and hyperelleptic curves}
\author{Alex Abreu, Sally Andria, and Marco Pacini}
\begin{document}

\maketitle

\begin{abstract}
 In this paper we resolve the degree-2 Abel map for nodal curves. Our results are based on a previous work of the authors reducing the problem of the resolution of the Abel map to a combinatorial problem via tropical geometry. As an application, we characterize when the (symmetrized) degree-2 Abel map is not injective, a property that, for a smooth curve, is equivalent to the curve being hyperelliptic. 
\end{abstract}

\bigskip

MSC (2020): 14H10, 14H40, 14T90.

Keywords: Algebraic curve, hyperelliptic curve, tropical curve, Abel map.


\section{Introduction}


This paper is dedicated to the construction of an explicit resolution of the degree-$2$ Abel-Jacobi map for a regular smoothing of a nodal curve. For a smooth curve $C$, the degree-$d$ Abel map is an important morphism taking a $d$-tuple of points on $C$, to the associated invertible sheaf on the curve (tensored with a fixed invertible sheaf on the curve). When the fixed invertible sheaf is $\mathcal O_C(dP_0)$ for a pont $P_0$ on $C$, the map is usually called Abel-Jacobi map. This map encodes many important properties of the curve. For instance,  the degree-$2$ Abel map detects when the curve is hyperelliptic. More precisely, a smooth curve is hyperelliptic if and only the degree-$2$ Abel map is not injective, and in this case the curve is endowed with a $g^1_2$, which can be identified with the fiber of the Abel map (up to the action of the symmetric group). In this paper we investigate how we can extend the above description to singular curves. 

We construct an explicit resolution of the degree-$2$ Abel-Jacobi map using the results in \cite{AAP}, where the general problem of resolving Abel maps is reduced to checking a certain combinatorial property of the tropical Abel map. More precisely, this  translates into the problem of showing the existence of a compatibility between the polyhedral structures of the tropical Jacobian of a curve and the product of the relevant tropical curve. This is the result contained  in \cite[Theorem A]{AAP}. In degree $2$, this yields a very explicit combinatorial condition describing how to blow up the domain of the geometric Abel map to get a resolution. This is summarized in Theorem \ref{thm:tropgeoAbel}.\par

 Let $\pi\col \C\to B$ be a regular smoothing of a curve $C$ with  a section  $\sigma$  through its smooth locus, and $\mu$ be a polarization on $\C$. We denote by $\overline{\J}_{\mu}^{\sigma}$ the Esteves compactified Jacobian parametrizing $(\sigma,\mu)$-quasistable torsion-free rank-$1$ sheaves on $\C$ (see \cite{EE01}). As usual, we write $\C^2:= \C\times_B\C$.
Let $\L$ be an invertible sheaf on $\C/B$ of degree-$(k+2)$. We define the degree-$2$ Abel (rational) map $\alpha^2_{\L}$ as
\begin{align*}
\alpha^2_{\L}\col \C^2 &\dashrightarrow \ol{\J}_{\mu}^{\sigma}\\
(Q_1,Q_2)&\longmapsto [\L|_{\pi^{-1}(Q_1)}(-Q_1-Q_2)].
\end{align*}

Our main result holds when $\mu$ is the trivial degree-$0$ polarization and $\L$ is the trivial sheaf $\O_{\mathcal C}$. An important ingredient to describe the resolution of the degree-$2$ Abel-Jacobi map is the notion of tail of a nodal curve.
A subcurve of a nodal curve is a $\delta$-tail if the subcurve and its complementary curves are connected and intersect each other in $\delta$ nodes. 
  
  \begin{thm*}[Theorem \ref{thm:Abel2}]
  Let $Z_1,\dots,Z_N$ be the 2-tails and the 3-tails of $C$ which do not contain $\sigma(0)$.  Consider the sequence of blowups
  \[
\wt\C^2_N\stackrel{\phi_N}{\lra}\cdots \stackrel{\phi_2}{\lra}\wt\C^2_1\stackrel{\phi_1}{\lra}\wt\C^2_0\stackrel{\phi_0}{\lra}\C^2
  \]
  where $\phi_0$ is the blowup of $\C^2$ along its diagonal subscheme and $\phi_i$ is the blowup of $\wt{\C}^2_{i-1}$ along the strict transform of the divisor $Z_i\times Z_i$ of $\C^2$ via $\phi_0\circ\cdots\circ \phi_{i-1}$. Then the rational map
  \[
\alpha^2_{\mc O_\C}\circ\phi_0\circ\cdots\circ\phi_N\col \wh\C^2_N\dra\overline{\mc J}^\sigma_\mu
  \]
  is a morphism, i.e., it is defined everywhere.
  \end{thm*}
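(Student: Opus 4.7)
The plan is to invoke Theorem \ref{thm:tropgeoAbel}, which (following \cite{AAP}) reduces the resolution of $\alpha^2_{\mc O_\C}$ to a combinatorial compatibility between the polyhedral decomposition of $\ol{\J}^\sigma_\mu$ induced by $(\sigma,\mu)$-quasistability and that of the tropical product induced by the tropical Abel map. Concretely, one must subdivide $\C^2$ by iterated blowups so that each cell of the refined decomposition maps into a single quasistable cell of the tropical Jacobian. With $\mu=0$ and $\L=\mc O_\C$, the sheaf attached to $(Q_1,Q_2)$ is $\mc O_C(-Q_1-Q_2)$, whose multidegree is $0$ away from the components containing the $Q_i$ and $-1$ on those components (or $-2$ when $Q_1=Q_2$ lies on a single component).

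Next I would translate $(\sigma,0)$-quasistability into a finite combinatorial condition, namely that for each proper subcurve $Y\subset C$, $\deg_Y$ must satisfy a sharp inequality depending only on the number of nodes $Y$ shares with its complement, with the strict-inequality side dictated by the position of $\sigma(0)$. A direct case analysis of the multidegrees of $\mc O_C(-Q_1-Q_2)$ shows that in this low degree the ambiguity in choosing a quasistable limit is controlled solely by subcurves meeting their complement in at most $3$ nodes, and only when both $Q_i$ degenerate into such a subcurve; the condition $\sigma(0)\notin Z$ removes the tails containing the section from the list. Together with the separate indeterminacy along $Q_1=Q_2$, this identifies precisely the diagonal together with the subschemes $Z_i\times Z_i$ as the combinatorial loci to be blown up.

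I would then verify that the sequence $\phi_N\circ\cdots\circ\phi_0$ resolves the indeterminacy. On the exceptional divisor produced by $\phi_i$ over the strict transform of $Z_i\times Z_i$, the extension of $\alpha^2_{\mc O_\C}$ is given by the unique $(\sigma,0)$-quasistable twist of $\mc O_C(-Q_1-Q_2)$ by the divisor $Z_i$, while on the diagonal exceptional divisor the extension follows from the standard degree-$2$ analysis on a single component. On the complement of the remaining strict transforms $Z_j\times Z_j$ with $j>i$, the map is already regular at that stage, so the recursion terminates after $N$ steps and the combinatorial compatibility of Theorem \ref{thm:tropgeoAbel} is achieved.

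The hard part is the exhaustive combinatorial check underlying the second step: one has to show that no subcurve meeting its complement in $4$ or more nodes contributes any further multidegree ambiguity in degree $2$, so that $2$- and $3$-tails (together with the diagonal) genuinely exhaust the critical subschemes. A secondary subtlety is verifying that the order in which the $Z_i$ are blown up is immaterial — different enumerations must yield canonically isomorphic $\wt\C^2_N$ — which amounts to checking that the strict transforms of the various $Z_i\times Z_i$ (and of the diagonal) are pairwise transverse after each blowup, a fact that ultimately follows from the tail structure of $C$.
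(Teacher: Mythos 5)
Your outline correctly identifies the overall strategy — reduce to the combinatorial criterion of Theorem \ref{thm:tropgeoAbel} and check it at pairs of nodes $(N_1,N_2)$ — but the central step is asserted rather than proved, and it is exactly the step that carries all the difficulty. You write that ``a direct case analysis of the multidegrees'' shows the ambiguity is controlled by $2$- and $3$-tails, and later you concede that ``the hard part is the exhaustive combinatorial check.'' What Theorem \ref{thm:tropgeoAbel} actually requires is not a statement about which subcurves can appear, but that the \emph{combinatorial type} of the quasistable representative $\qs(2p_0-p_{e_1,t_1}-p_{e_2,t_2})$ is constant as $(t_1,t_2)$ ranges over the triangles $\{t_1<t_2\}$, $\{t_1>t_2\}$ (for a blowup along $Z\times Z$), or over the whole square (for no blowup). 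The paper establishes this by combining three ingredients you do not supply: (a) the explicit description (Theorem \ref{thm:convert}, imported from \cite{P}) of $\qs(2v_0-v_1-v_2)$ as a twist by a nested family $\mc F_\Gamma(v_0,v_1,v_2)$ of $1$-, $2$- and $3$-hemispheres; (b) a case division according to whether the edges at the inserted vertices $v_{e_1},v_{e_2}$ meet the boundary of the minimal free $2$-hemisphere $K_{2,1}$ or $3$-hemisphere $K_{3,1}$ (cases (1), (2.a)--(2.c), (3.a)--(3.c)); and (c) in each case, an explicit length function $\ell'$ on a refinement $\Gamma'$ of the dual graph, built from the chain of edges $f_1,\dots,f_k$ linking consecutive free hemispheres, chosen so that every $\mc P_H$ is principal and the resulting combinatorial type is visibly independent of $(t_1,t_2)$ in the relevant region. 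This is where Lemma \ref{lem:intersect23} (controlling how a $2$- and a $3$-hemisphere can intersect) and the parity analysis of $k$ do real work. Without some substitute for this construction, your second and third paragraphs are a restatement of the goal, not a proof; in particular nothing in your argument rules out that the combinatorial type jumps inside one of the two triangles, which would make the single blowup along $Z_i\times Z_i$ insufficient.

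Two further points. First, the reduction to pairs of nodes and the degenerate cases ($N_1$ a separating node, $N_1=N_2$, and the case where only a $2$-tail separates $\{N_1,N_2\}$) are handled in the paper by Propositions \ref{prop:1-tail}, \ref{prop:diagonal} and \ref{prop:2-tail}, each of which works by exhibiting an explicit principal divisor $\mc P$ that reduces the problem to the degree-$1$ Abel map; your proposal gestures at ``the standard degree-$2$ analysis on a single component'' for the diagonal but gives no argument. Second, for the order-independence of the blowups you propose transversality of the strict transforms, which is neither obviously true nor what is needed: the centers $Z_i\times Z_i$ and $Z_j\times Z_j$ genuinely meet at points $(N_1,N_2)$ with $\{N_1,N_2\}\subset Z_i\cap Z_i^c\cap Z_j\cap Z_j^c$, and the paper instead shows combinatorially (via Lemma \ref{lem:perfect}) that at any such point the two local blowups coincide, deriving a contradiction from the nesting relations forced on $Z_i$ and $Z_j$. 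You would need to either prove your transversality claim or replace it by an argument of this kind.
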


Next we investigate the relation between the degree-2 Abel map and hyperelliptic (nodal) curves. More precisely, we study when the (symmetrized) degree-2 Abel map is not injective. The upshot is that this happens exactly when the curve has a simple torsion-free rank-1 sheaf of degree 2 with non-negative degree over every component of the curve and at least two sections. We call a curve satisfying all these condition a \emph{pseudo-hyperelliptic} curve. It is easy to see that if a curve is hyperelliptic, then it is pseudo-hyperelliptic. 

It is worth noticing that a variation of the condition of hyperelliptic curve was already given by Caporaso in \cite{Capohyper}. She introduced and study the notion of weakly-hyperelliptic curve, which is the condition of the existence of a balanced degree-2 invertible sheaf on a curve with at least 2 sections. Again, if a curve is hyperelliptic, then it is weakly-hyperelliptic. We study the relation between weakly-hyperelliptic and pseudo-hyperelliptic.

\begin{thm*}[Theorem \ref{thm:hyperelliptic}]
Let $C$ be a curve with no separating nodes. The following properties hold.
\begin{enumerate}
    \item The curve $C$ is pseudo-hyperelliptic
if and only if, for some (every) regular smoothing $\C\to B$ of $C$, the symmetrized degree-$2$ Abel map of $\mathcal C$ is not injective.
\item If $C$ is stable and weakly-hyperelliptic, then $C$ is pseudo-hyperelliptic.
\end{enumerate}
\end{thm*}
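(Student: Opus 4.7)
The plan is to treat the two parts separately. For part (1), I will combine Theorem \ref{thm:Abel2} with a ``dictionary'' between pairs of points in the resolved domain and generalized degree-$2$ effective data on $C$: non-injectivity of the symmetrized Abel map will translate into the existence of a single simple torsion-free rank-$1$ sheaf of degree $2$ with two linearly independent sections and non-negative degrees on components. For part (2), I will compare Caporaso's balancedness bound with the non-negative degree condition using the stability of $C$ and the presence of two sections.

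For part (1), in the $(\Leftarrow)$ direction, suppose two distinct points $P,P'$ of the symmetrized resolved domain map to the same class $[\F]\in \ol{\J}^\sigma_\mu$. Each input corresponds, via the explicit resolution in Theorem \ref{thm:Abel2}, to a length-$2$ effective datum on $C$: either a pair of smooth points, or their limit parametrized by an exceptional divisor of some $\phi_i$. Passing to the natural degree-$2$ torsion-free rank-$1$ sheaf $\mc{G}$ associated to each input (so that $\F = \mc{G}^{-1}$ in the invertible case, and an analogous twist-duality in general), the coincidence of images yields a single $\mc{G}$ with two non-proportional global sections coming from the two distinct inputs. Non-negativity of $\deg_Y \mc{G}$ on every component $Y$ is automatic because the effective data consist of smooth points or their nodal limits, and simplicity follows from the $(\sigma,\mu)$-quasistability of $\F$. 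The $(\Rightarrow)$ direction is the reverse: two independent sections of a pseudo-hyperelliptic witness $\mc{G}$ determine two distinct generalized effective divisors on $C$, which under the combinatorial description of $\wh{\C}^2_N$ in Theorem \ref{thm:Abel2} (the diagonal and the tail loci $Z_i\times Z_i$) lift to two distinct points of the resolved symmetric domain sharing the same image in $\ol{\J}^\sigma_\mu$.

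For part (2), let $\L$ be a balanced invertible sheaf of degree $2$ on the stable curve $C$ with $h^0(\L)\geq 2$. Since $\L$ is invertible, it is automatically simple and torsion-free rank-$1$ with two sections, so the only remaining point is to show $\deg_Y\L\geq 0$ for every subcurve $Y$. If some $Y$ had $\deg_Y\L<0$, then $H^0(\L|_Y)=0$ and every section of $\L$ would vanish identically on $Y$; the two independent sections would then restrict to sections of $\L|_{Y^c}$ vanishing at $Y\cap Y^c$. Comparing the resulting degree bound on $Y^c$ with Caporaso's balancedness on $Y$, and using the stability hypothesis to provide lower bounds on $\deg_W \omega_C$ for every subcurve $W$, yields a numerical contradiction. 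Hence $\L$ itself is a pseudo-hyperelliptic witness for $C$.

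The main obstacle I expect lies in part (1): making the correspondence between the exceptional locus of $\phi_1,\dots,\phi_N$ and the non-invertible pseudo-hyperelliptic witnesses fully precise. One must verify, tail by tail, that every such witness arises from exactly one unordered pair of points on $\wh{\C}^2_N$, so that the combinatorial input of the $2$- and $3$-tails exactly matches the non-invertibility loci of degree-$2$ sheaves with two sections. In part (2), the delicate step is that balancedness is a two-sided bound while pseudo-hyperellipticity requires a one-sided non-negativity; here the presence of two sections together with stability of $C$ is essential in converting one into the other.
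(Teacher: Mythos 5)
Your proposal has a genuine gap in the direction ``$\beta_2$ not injective $\Rightarrow$ $C$ pseudo-hyperelliptic''. You assert that two distinct inputs with the same image yield ``a single $\mathcal G$ with two non-proportional global sections coming from the two distinct inputs''. But equality of images in the Esteves Jacobian $\ol{\J}^\sigma_\mu$ does \emph{not} mean $\mathcal O_C(q_1+q_2)\cong\mathcal O_C(q_1'+q_2')$; it means these sheaves agree up to a twister $T=\mathcal O_{\C}(\sum a_iC_i)|_C$, which is generally nontrivial. So the two effective divisors $q_1+q_2$ and $q_1'+q_2'$ are zero loci of sections of two \emph{different} line bundles, and you do not get two sections of one sheaf for free. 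The paper's proof manufactures the second section of $L:=\mathcal O_C(q_1+q_2)$ from the twister itself: writing $\mathcal O_C(q_1'+q_2'-q_1-q_2)\cong T$, the corresponding principal divisor $v_1'+v_2'-v_1-v_2$ on the dual graph is $\div(f)$ for a rational function $f$, and the subcurve $Z$ where $f$ attains its minimum satisfies $|Z\cap Z^c|=2$, $q_1,q_2\in Z$, and $L|_Z(-Z\cap Z^c)\cong\mathcal O_Z$; this trivialization glues with the zero section on $Z^c$ to give the second, independent section. Your ``dictionary'' skips exactly this step, and without it the argument does not close. (The nodal/exceptional cases are then handled by passing to semistable modifications, which you gesture at but which again require the twister analysis, not just a matching of tails with non-invertibility loci.)

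The converse direction and part (2) also rest, in the paper, on Caporaso's structure theorem for degree-$2$ sheaves with two sections (\cite[Theorem 5.9]{Capohyper}): it locates the sections either on a single component $C_0$ with $h^0(L|_{C_0})\ge 2$ and $L$ trivial elsewhere, or on a special $\mathcal B$-pair with multidegree $(1,1)$, and in both cases this forces the witness $I$ to be invertible (item (3) of the theorem) and produces the two distinct unordered pairs of points with equal image. Your sketch for $(\Rightarrow)$ does not explain why two independent sections give two \emph{distinct} points of $\Sym^2(\wt\C)$ supported appropriately, nor how a non-invertible witness is handled; both come out of the structure theorem via the $\mathbb P(I)$ construction. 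For part (2), your proposed numerical comparison of balancedness with stability is a plausible alternative route but is not carried out; the paper instead reads off non-negativity of the multidegree directly from the two cases of Caporaso's theorem. As written, the proposal needs the twister/second-section argument supplied before it can be considered a proof of part (1).
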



\section{Preliminaries}

Throughout the paper, we will use the notations introduced in \cite[Sections 2 and 3]{AAMPJac} and \cite[Section 3]{AAP}. In this section we just recall some basic definitions and constructions.


Given a graph $\G$, we denote by $V(\G)$  and $E(\G)$ the sets of vertices and edges of $\G$. Given a subset $V\subset V(\Gamma)$, we set $V^c=V(\Gamma)\setminus V$. For an orientation $\overrightarrow \G$ on $\G$, we denote by $s(e)$ and $t(e)$ the source and target of an edge $e\in E(\G)$. Given subsets $V,W\subset V(\G)$, we let $E(V,W)$ be the set of edges of $\G$ connecting a vertex of $V$ with a vertex of $W$.
A \emph{refinement} of a graph $\G$ is a graph obtained by inserting a non-negative number $n_e$ (depending on $e$) of vertices in the interior of each edge $e$ of $\G$. If a vertex $v$ of the refinement is inserted in the interior of an edge $e$ of $\G$, we say that $v$ is a vertex \emph{over} $e$.

A \tit{metric graph} is a pair $(\G,\ell)$ where $\G$ is a graph and $\ell$ is a function $\ell\col E(\G)\ra\R_{>0}$, called the \tit{length function}.
Let $(\G,\ell)$ be a metric graph. If $\ora{\G}$ is an orientation on $\G,$ we define the \tit{tropical curve} $X$ associated to $(\ora{\G},\ell)$ as
\[X=\frac{\left(\bigcup_{e\in E(\ora{\G})}I_{e}\cup V(\ora{\G})\right)}{\sim}\]
where $I_{e}=[0,\ell(e)]\times\{e\}$ and $\sim$ is the equivalence relation generated by $(0,e)\sim s(e)$ and $(\ell(e),e)\sim t(e).$ 
We usually just write $e$ to represent $I_e$ in $X$, and denote by $e^\circ$  the interior of $e$. We say that $(\Gamma,\ell)$ is a \emph{model} of the tropical curve $X$. We will identify tropical curves with isometric models.

Let $\G$ be a graph and define $\ell\col E(\G)\to \mathbb{R}$ by $\ell(e)=1$ for every $e\in E(\G)$. We denote by $X_{\G}$ the tropical curve induced by the metric graph $(\G,\ell)$.

Let $X$ be a tropical curve and $\Gamma$ be a graph. 
A \tit{divisor} on $X$ (respectively, on $\Gamma$) is a function $\mc{D}\col X\ra\mathbb{Z}$ (respectively, $D\col V(\Gamma)\to \mathbb{Z}$) such that $\mc{D}(p)\neq0$ only for finitely many points $p\in X.$ Given a divisor $\mathcal D$ on $X$, we define the \tit{support} of $\mc{D}$ as the set of points $p$ of $X$ such that $\mc{D}(p)\neq0$ and denote it by $\supp(\mc{D})$. A \emph{polarization} on $X$ (respectively, on $\Gamma$) is a function $\mu\col X\ra \mathbb R$ (respectively, $\mu\col V(\Gamma)\to \mathbb{R}$) such that $\mu(p)\ne 0$ only for finitely many points $p\in X$ and such that $\sum_{p\in X} \mu(p)$ (respectively, $\sum_{v\in V(v\in V(\Gamma)} \mu(v)$) is an integer, called the \emph{degree} of the polarization $\mu$.\par
   Given a point $p_0$ in $X$, (respectively, a vertex $v_0\in V(\Gamma)$), a divisor $\mathcal{D}$ on $X$ (respectively, $D$ on $\Gamma$) is called $(p_0,\mu)$-quasistable (respectively, $(v_0,\mu)$-quasistable) if:
   \[
   \sum_{p\in Y}(\mathcal{D}(p)-\mu(p)) + \frac{\delta_Y}{2}\geq 0
   \]
   for every tropical subcurve $Y$ of $X$ (respectively, every subset $Y\subset V(\Gamma)$), where the inequality is strict if $p_0\in Y\neq  X$. Here, $\delta_Y$ is the number of tangent direction outgoing from $Y$ in the case of a tropical curve (see \cite[Section 3.1]{AAMPJac} for the precise definition), while it is equal to $|E(Y,Y^c)|$ in the case of a graph.

 Let $X$ be a tropical curve and $p_0$ be a point of $X$. Let $\mu$ be a polarization on $X$. 
 Recall that in an equivalence class of a divisor on a tropical curve there is just one $(p_0,\mu)$-quasistable divisor (see \cite[Theorem 5.6]{AAMPJac}). For a degree-$d$ divisor $\D$ on $X$, we denote by $\qs(\D)$ the unique $(p_0,\mu)$-quasistable divisor on $X$ which is equivalent to $\D$.
Given an oriented model $(\Gamma,\ell)$ of $X$, for every edge $e\in E(\G)$ and every real number $t\in [0,\ell(e)]$, we let by 
$p_{e,t}$ the point on $e$ at distance $t$ from the source of $e$.\par
  Given a tropical curve $X$, we let  $J^{\trop}_{p_0,\mu}(X)$ be the tropical Jacobian parametrizing $(p_0,\mu)$-quasistable divisors on $X$. Recall that $J^{\trop}_{p_0,\mu}(X)$ is homeomorphic to the usual tropical Jacobian (see \cite[Theorem 5.10]{AAMPJac}). 
   We set $X^2:=X\times X$.
Given a divisor $\D^\dagger$ on $X$, we define the tropical Abel map
\begin{align*}
 \alpha_{2,\D^\dagger}^\trop
\col X^2 &\to J^{\trop}_{p_0,\mu}(X)\\
(p_1,p_2)&\longmapsto [\D^\dagger-p_1-p_2],
\end{align*}
where $[-]$ denotes the class of a divisor in the tropical Jacobian. Alternatively, the map $\alpha_{2,\D^\dagger}^\trop$ takes $(p_1,p_2)$ to the unique $(p_0,\mu)$-quasistable divisor in the class of $\D^\dagger-p_1-p_2$.

\begin{Rem}
\label{prop:quasiquasi}
Let $X$ be a tropical curve with a point $p_0\in X$. Let $\Gamma$ be a model of $X$. Let $\mu$ be a polarization on $X$ induced by a polarization on $\Gamma$ and $\D$ a degree-$d$ divisor on $X$.
We let $\widehat{\Gamma}$ the minimal refinement of $\Gamma$ such that $\supp(\D)\subset V(\widehat{\Gamma})$. We denote by $D$ the divisor on $\widehat{\Gamma}$ induced by $\mathcal D$. We call the pair $(\widehat{\Gamma},D)$ on $\widehat{\Gamma}$ the \emph{combinatorial type} of $\mathcal D$.
By  \cite[Proposition 5.3]{AAMPJac}, the degree-$d$ divisor $\D$ on $X$ is $(p_0,\mu)$-quasistable if and only if $\widehat{\Gamma}$ is obtained by inserting at most one vertex in the interior of each edge of $\Gamma$ and $D$ is $(p_0,\mu)$-quasistable on $\widehat{\Gamma}$. 
\end{Rem}

\section{Degree-2 Abel maps}\label{sec:degree2}

Let $C$ be a nodal curve over an algebraically closed filed $k$. A subcurve $Z$ of $C$ is a reduced union of components of $C$. Given a subcurve $Z$ of $C$, we let $Z^c:=\overline{C\setminus Z}$. Throughout this section we will fix a regular smoothing $\pi\col\C\ra B$ of a nodal curve $C$ with a section $\sigma\col B\ra \C$ of $\pi$ through its smooth locus.  We denote by $\C^2:=\C\times_B\C$.

Let $\mu$ be a degree-$k$ polarization on $\C$. We denote by $\ol{\J}_{\mu}^{\sigma}$ the Esteves compactified Jacobian parametrizing $(\sigma,\mu)$-quasistable torsion-free rank-$1$ sheaves on the curves of the family $\pi$ (see \cite{EE01} for more details).
Let $\L$ be an invertible sheaf on $\C/B$ of degree-$(k+2)$. As in \cite{AAJCMP}, we define the degree-$2$ Abel (rational) map $\alpha^2_{\L}$ as
\begin{align*}
\alpha^2_{\L}\col \C^2 &\dashrightarrow \ol{\J}_{\mu}^{\sigma}\\
(Q_1,Q_2)&\longmapsto [\L|_{\pi^{-1}(\pi(Q_1))}(-Q_1-Q_2)].
\end{align*}

We let $\Gamma$ be the dual graph of $C$ and $X_{\Gamma}$ be the tropical curve induced by $\Gamma$ (with unitary lengths). Given an invertible sheaf $\L$ on $\C$, we denote by $D^{\dagger}_{\L}$ the divisor on $\G$ given by the multidegree of $\L|_C$. We also let $\D^\dagger_\L$ be the divisor on $X_\G$ induced by $D^{\dagger}_{\L}$.


Given a point $\N=(N_1,N_2)$ of $\mathcal C^2$, where $N_i$ is a  node of $C$, we will consider the following two ways of blowing up $\C^2$ locally around $\N$. If $N_1\in Z_1\cap Z_1^c$ and $N_2\in Z_2\cap Z_2^c$ for subcurves $Z_1$ and $Z_2$ of $C$, we can consider the blowups $\phi\col \wt{\mc \C}^2_\phi\ra \C^2$ and $\phi'\col \wt{\mc \C}^2_{\phi'}\ra \C^2$  respectively along $Z_1\times Z_2$, or along $Z_1\times Z_2^c$. The first one is also equivalent to the blowup along $Z_1^c\times Z_2^c$ and the second one is equivalent to the blowup along $Z_1^c\times Z_2$. In both cases, the inverse image of $\mc N$ is isomorphic to $\mathbb P^1_k$. The situation is illustrated in Figure \ref{Fig:blowup}, where $\st_\phi$ and $\st_{\phi'}$ applied to a divisor of $\C^2$ denote the strict transform of this divisor. These blowups induce a dual picture on the product $X_\Gamma^2$: we illustrate the relation between these blowups and the dual picture in Figure \ref{fig:blowup-square1}.
 
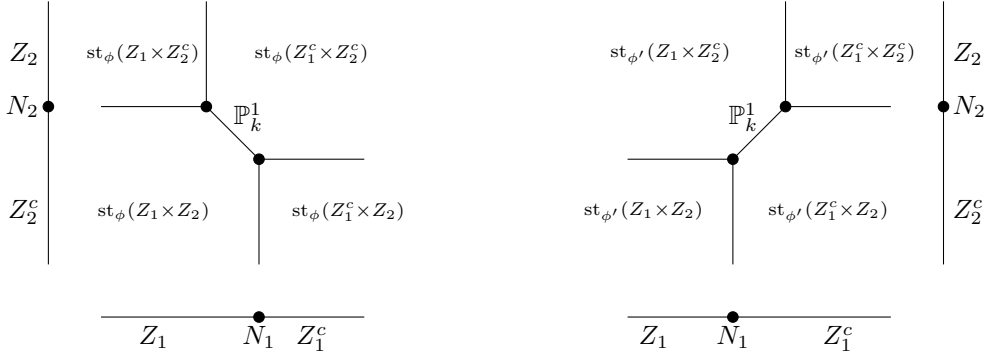
\begin{figure}[ht]
    \centering
    \begin{tikzpicture}[scale=0.7]
    \begin{scope}[shift={(0,0)}]
        \draw (2,1) to (1,2);
        \draw (1,2) to (-1,2);
        \draw (2,1) to (2,-1);
        \draw (1,2) to (1,4);
        \draw (2,1) to (4,1);
        \draw[fill] (2,1) circle [radius=0.1];
        \draw[fill] (1,2) circle [radius=0.1];

        \draw (-2,-1) to (-2,4);
        \draw (-1,-2) to (4,-2);
        \draw[fill] (2,-2) circle [radius=0.1];
        \draw[fill] (-2,2) circle [radius=0.1];
        \node[below] at (0,-2) {$Z_1$};
        \node[below] at (3,-2) {$Z_1^{c}$};
        \node[left] at (-2,3) {$Z_2$};
        \node[left] at (-2,0) {$Z_2^c$};
        \node[left] at (-2,2) {$N_2$};
        \node[below] at (2,-2) {$N_1$};
        
        \node at (3,3) {$\scriptstyle{\st_{\phi}(Z^c_1\times Z^c_2)}$};
        \node at (0, 0) {$\scriptstyle{\st_{\phi}(Z_1\times Z_2)}$};
        \node at (-0.2,3) {$\scriptstyle{\st_\phi(Z_1\times Z^c_2)}$}; 
        \node at (3.7,0) {$\scriptstyle{\st_{\phi}(Z^c_1\times Z_2)}$};
        \node at (1.8,1.8) {$\Ps^1_k$};
    \end{scope}
   \begin{scope}[shift={(10,0)}]
        \draw (1,1) to (2,2);
        \draw (1,1) to (-1,1);
        \draw (1,1) to (1,-1);
        \draw (2,2) to (2,4);
        \draw (2,2) to (4,2);
        \draw[fill] (1,1) circle [radius=0.1];
        \draw[fill] (2,2) circle [radius=0.1];

        \draw (5,-1) to (5,4);
        \draw (-1,-2) to (4,-2);
        \draw[fill] (1,-2) circle [radius=0.1];
        \draw[fill] (5,2) circle [radius=0.1];
        \node[below] at (-0.5,-2) {$Z_1$};
        \node[below] at (3,-2) {$Z_1^{c}$};
        \node[right] at (5,3) {$Z_2$};
        \node[right] at (2,3) {$\scriptstyle{\st_{\phi'}(Z^c_1\times Z^c_2)}$};
        \node[right] at (-1.5,3) {$\scriptstyle{\st_{\phi'}(Z_1\times Z^c_2)}$};
        \node[right] at (5,0) {$Z_2^c$};
        \node[right] at (1.5,0) {$\scriptstyle{\st_{\phi'}(Z^c_1\times Z_2)}$};
        \node[right] at (-2,0) {$\scriptstyle{\st_{\phi'}(Z_1\times Z_2)}$};
        \node[right] at (5,2) {$N_2$};
        \node[below] at (1,-2) {$N_1$};
        
        \node at (1.2,1.8) {$\Ps^1_k$};
    \end{scope}
       
    \end{tikzpicture}

    \caption{The two types of blowup of $\C^2$ around $(N_1,N_2)$.}
    \label{Fig:blowup}
\end{figure}

\begin{Thm}\label{thm:tropgeoAbel}
Let $\pi\col\C\ra B$ be a regualar smoothing of a nodal curve $C$ with smooth components. 
Let $\sigma\col B\ra \C$ be a section of $\pi$ through its smooth locus. Let $\mu$ be a polarization of degree $k$ over the family and $\L$ be an invertible sheaf on $\C$ of degree $k+2$.
Let $(N_1,N_2)$ be a point of $\C^2$, with $N_i$ a node of $C$. Let $Z_1$ and $Z_2$ be subcurves of $C$ such that $N_1\in Z_1\cap Z_1^c$ and $N_2\in Z_2\cap Z_2^c$. Let $e_1$ and $e_2$ be the edges in the dual graph $\G$ of $C$ that correspond to $N_1$ and $N_2$, where $e_i$ is oriented from $Z_i$ to $Z_i^c$. Consider the divisor $\D_{x,y}=\D^\dagger_\L-p_{e_1,x}-p_{e_2,y}$ on $X_\Gamma$, for some $x,y\in [0,1]$. 
       
\begin{enumerate}
    \item If the combinatorial type of $\qs(\D_{x,y})$ is constant on the sets 
    \[
    \{(x,y);\ 0<x<y<1\}\text{  and } \{(x,y);\ 0<y<x<1\},
    \]
    then the blowup of $\C^2$ along $Z_1\times Z_2$ resolves the Abel map $\alpha^2_\L$ locally around the point $(N_1,N_2)$.
    \item If the combinatorial type of $\qs(\D_{x,y})$ is constant on the sets 
    \[
    \{(x,y);\ 0<x<1-y<1\}\text{  and } \{(x,y);\ 0<1-y<x<1\},
    \]
    then the blowup of $\C^2$ along $Z_1\times Z_2^c$ resolves the Abel map $\alpha^2_\L$ locally around the point $(N_1,N_2)$.
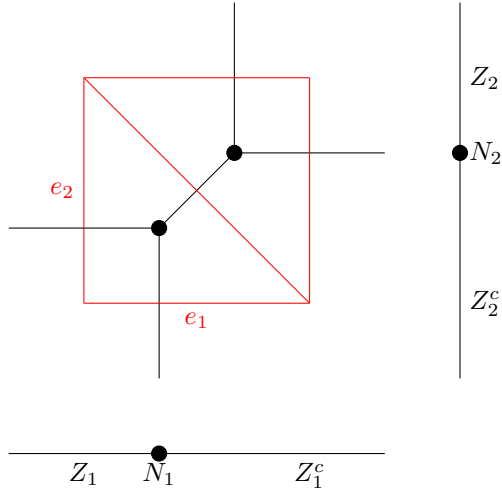
\begin{figure}[ht]
\begin{center}
\begin{tikzpicture}[scale=1]
\begin{scope}[shift={(0,0)}]
\draw (1,1) to (2,2);
\draw (1,1) to (-1,1);
\draw (1,1) to (1,-1);
\draw (2,2) to (2,4);
\draw (2,2) to (4,2);
\draw[fill] (1,1) circle [radius=0.1];
\draw[fill] (2,2) circle [radius=0.1];

\draw (5,-1) to (5,4);
\draw (-1,-2) to (4,-2);
\draw[fill] (1,-2) circle [radius=0.1];
\draw[fill] (5,2) circle [radius=0.1];
\node[below] at (0,-2) {$Z_1$};
\node[below] at (3,-2) {$Z_1^{c}$};
\node[right] at (5,3) {$Z_2$};
\node[right] at (5,0) {$Z_2^c$};
\node[right] at (5,2) {$N_2$};
\node[below] at (1,-2) {$N_1$};

\node[left] at (0,1.5) {\textcolor{red}{$e_2$}};
\node[below] at (1.5,0) {\textcolor{red}{$e_1$}};

\end{scope}

\begin{scope}[shift={(0,0)},scale=6]
\draw[red] (0,0) to (0.5,0); 
\draw[red] (0,0) to (0,0.5); 
\draw[red] (0.5,0) to (0.5,0.5);
\draw[red] (0,0.5) to (0.5,0.5);
\draw[red] (0,0.5) to (0.5,0);
\end{scope}
\end{tikzpicture}
\caption{The sets  $\{(x,y);\ 0<x<1-y<1\}$  and  $\{(x,y);\ 0<1-y<x<1\}$ and the corresponding  blow-up.}
\label{fig:blowup-square1}
\end{center}
\end{figure}
    \item If the combinatorial type of $\qs(\D_{x,y})$ is constant on the set
    \[
    \{(x,y);\ 0<x,y<1\},
    \]
    then the Abel map $\alpha^2_\L$  is defined at the point $(N_1,N_2)$.
        \end{enumerate}
  \end{Thm}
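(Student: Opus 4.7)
The plan is to invoke the main resolution theorem of \cite{AAP} (Theorem A), which reduces the resolution of the Abel map $\alpha^2_\L$ to a combinatorial compatibility condition on the tropical side. By that result, a blowup $\widetilde{\C}^2 \to \C^2$ locally resolves $\alpha^2_\L$ precisely when the corresponding polyhedral refinement of $X^2_\Gamma$ is such that the tropical Abel map $\alpha^\trop_{2,\D^\dagger_\L}$ has image of constant combinatorial type on each new cell. We may therefore work locally around the 2-cell $e_1 \times e_2 \subset X^2_\Gamma$ that is dual to the stratum of $\C^2$ containing $(N_1, N_2)$; this cell is the unit square parameterized by $(x,y) \in [0,1]^2$, and the tropical Abel map sends $(p_{e_1,x}, p_{e_2,y})$ to the class $[\D^\dagger_\L - p_{e_1,x} - p_{e_2,y}] = \qs(\D_{x,y})$.

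The core of the argument is to match the three blowup options in the statement with the three natural polyhedral subdivisions of this square: the trivial one, the one cut by the diagonal from $(0,0)$ to $(1,1)$, and the one cut by the antidiagonal from $(0,1)$ to $(1,0)$. Performing no blowup clearly corresponds to the trivial subdivision. The claim is that the blowup along $Z_1 \times Z_2$ (equivalently $Z_1^c \times Z_2^c$) corresponds to the diagonal subdivision, while the blowup along $Z_1 \times Z_2^c$ (equivalently $Z_1^c \times Z_2$) corresponds to the antidiagonal one, as depicted in Figure \ref{fig:blowup-square1} for the latter. The dictionary is: the exceptional $\mathbb P^1$ of each blowup tropicalizes to the inserted edge of the subdivision, and its two distinguished points, where the strict transforms of the two pairs $Z_i^{(c)} \times Z_j^{(c)}$ sharing a common $\mathbb P^1$ meet the exceptional divisor, are dual to the two endpoints of that inserted edge. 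The two triangles on either side of the inserted edge are dual to the corresponding strict transforms themselves.

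Granted this translation, the three cases of the theorem follow directly from the criterion of \cite{AAP}. In case (1), the hypothesis is exactly that $\qs(\D_{x,y})$ has constant combinatorial type on each triangle of the diagonal subdivision, so the blowup along $Z_1 \times Z_2$ yields a polyhedral refinement of $X^2_\Gamma$ compatible with $\alpha^\trop_{2,\D^\dagger_\L}$ at the cell $e_1 \times e_2$, hence resolves $\alpha^2_\L$ locally at $(N_1, N_2)$. Case (2) is identical with the antidiagonal replacing the diagonal. In case (3), the hypothesis gives constancy on the whole square, so the trivial subdivision already satisfies the compatibility condition of \cite{AAP}, and $\alpha^2_\L$ is defined at $(N_1, N_2)$ with no blowup required.

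The main obstacle, I expect, is verifying the dictionary in paragraph two, in particular ensuring that the diagonal (and not the antidiagonal) is the one matched with the blowup along $Z_1 \times Z_2$, taking proper account of the orientations of $e_i$ from $Z_i$ to $Z_i^c$. This reduces to a local toric computation: in étale local coordinates around $(N_1, N_2)$, writing $\C^2$ as $\Spec k[[t_1,u_1,t_2,u_2]]/(t_1u_1,t_2u_2)$ with $t_i$ vanishing on $Z_i$ and $u_i$ vanishing on $Z_i^c$, the ideal of $Z_1 \times Z_2$ is generated by $(t_1, t_2)$ and its blowup introduces an exceptional $\mathbb P^1$ whose attachment points to the strict transforms of $Z_1 \times Z_2$ and $Z_1^c \times Z_2^c$ tropicalize to the corners $(0,0)$ and $(1,1)$, which are precisely the endpoints of the diagonal. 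Symmetrically, the blowup along $Z_1 \times Z_2^c$ inserts the antidiagonal connecting $(0,1)$ and $(1,0)$, completing the identification needed to conclude.
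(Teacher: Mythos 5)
Your treatment of items (1) and (2) coincides with the paper's: both are quoted directly from \cite[Theorem 5.4]{AAP}, and your ``dictionary'' between the two blowups and the diagonal/antidiagonal subdivisions of the square $e_1\times e_2$ (including the local toric check of which subdivision goes with which center) is exactly the content packaged into that cited theorem. So far, so good.

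The gap is in item (3). You assert that constancy of the combinatorial type on the whole open square means the \emph{trivial} subdivision satisfies the compatibility criterion of \cite{AAP}, and hence that $\alpha^2_\L$ is defined at $(N_1,N_2)$ ``with no blowup required.'' But the criterion you are invoking is a statement about a blowup resolving the map, i.e.\ about $\alpha^2_\L\circ\phi$ being a morphism on the exceptional locus; it does not by itself say anything about $\alpha^2_\L$ extending over $(N_1,N_2)$ before blowing up. Constancy on the whole square certainly implies the hypotheses of both (1) and (2), so both blowups $\phi$ (along $Z_1\times Z_2$) and $\phi'$ (along $Z_1\times Z_2^c$) resolve the map --- but a priori the resolved map could still be non-constant along the exceptional $\mathbb P^1_k$, in which case $\alpha^2_\L$ would genuinely fail to be defined at $\mc N=(N_1,N_2)$. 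What is missing is a descent argument showing that the resolved map contracts the exceptional fiber. The paper supplies this: it picks two points $x_1,x_2$ on $\phi'^{-1}(\mc N)$, runs arcs $\Spec k[[t]]\to\mc Y$ through them lying generically in $\st_{\phi'}(Z_1\times Z_2^c)$, lifts the composites to $\mc X$ where both arcs pass through the single distinguished point $y_0=\st_\phi(Z_1\times Z_2)\cap\st_\phi(Z_1\times Z_2^c)\cap\st_\phi(Z_1^c\times Z_2^c)$, and concludes $\alpha^2_\L\circ\phi'(x_1)=\alpha^2_\L\circ\phi(y_0)=\alpha^2_\L\circ\phi'(x_2)$; then $\phi'_*\mc O_{\mc Y}\cong\mc O_{\mc C^2}$ (normality of $\C^2$) and the Rigidity Lemma give that $\alpha^2_\L\circ\phi'$ factors through $\phi'$, whence $\alpha^2_\L$ is defined at $\mc N$. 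Your proof needs this step (or an equivalent justification that the cited tropical criterion is actually stated for the trivial subdivision), and as written it does not contain it.
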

\begin{proof}
Items (1) and (2) follow directly from \cite[Theorem 5.4]{AAP}. 

Let us prove Item (3). Let $\mc N=(N_1,N_2)\in \C^2$.
Let $\phi\col \mc X\ra \mc C^2$ and $\phi'\col\mc Y\ra \mc C^2$ be the blowups respectively along $Z_1\times Z_2$ and $Z_1\times Z_2^c$ (see Figure \ref{Fig:blowup}).
By items (1) and (2), we know that  $\alpha^2_{\mc L}\circ\phi$ and $\alpha^2_{\mc L}\circ\phi'$ are defined respectively over the inverse images $\phi^{-1}(\mc N)\cong \mathbb P^1_k$ and $\phi'^{-1}(\mc N)\cong \mathbb P^1_k$.  
Let $y_0$ be the distinguished point on $\phi^{-1}(\mc N)$ given by 
\[
y_0=\st_\phi(Z_1\times Z_2)\cap \st_\phi(Z_1\times Z_2^c)\cap \st_\phi(Z_1^c\times Z_2^c).
\]
Let $x_1,x_2$  be any two points on $\phi'^{-1}(\mc N)$. We know that $\alpha^2_\L\circ \phi'$ is defined at $x_1$ and $x_2$. For $i=1,2$, consider a map $\rho'_i\col \Spec k[[t]]\ra \mc Y$ such that $\rho'_i(0)=x_i$ and $\rho_i(\eta)$ is contained in $\st_{\phi'}(Z_1\times Z_2^c)$, as in Figure \ref{fig:blowups_phi}, where $0$ and $\eta$ are the special and generic points of $\Spec k[[t]]$, respectively. In particular, we have $\alpha^2_\L\circ \phi'(x_i)=\alpha^2_\L\circ\overline{\rho}_i(0)$, where $\overline{\rho}_i=\phi'\circ\rho'_i\col \Spec(k[[t]])\ra \mc C^2$. By construction, we can lift $\overline{\rho}_i$ to maps $\rho_i\col \Spec k[[t]]\to \mc X$ such that $\rho_1(0)=\rho_2(0)=y_0$. By the same reasoning, we have $\alpha^2_\L\circ \phi(y_0)=\alpha^2_\L\circ\rho_i(0)$ for $i=1,2$.
Then we get: 
\[
\alpha^2_{\mc L}\circ \phi'(x_1)=\alpha^2_{\mc L}\circ \phi(y_0)=\alpha^2_{\mc L}\circ \phi'(x_2).
\]
Hence $\alpha^2_{\mc L}\circ \phi'$ contracts all fibers of $\phi'$. 
Moreover, arguing as in the proof of \cite[Corollary III 11.4]{H}, we have $\phi'_*\mc O_{\mc Y}\cong \mc O_{\mc C^2}$, since $\phi'$ is birational and $\mc C^2$ is normal. 
Hence by the Rigidity Lemma (see \cite[Lemma 1.15, pag.12]{D}) the map 
$\alpha^2_{\mc L}\circ \phi'$ factors through $\phi'$, so $\alpha^2_{\mc L}$ is defined at $\mc N$.

\begin{figure}[htb]
\begin{center}
\begin{tikzpicture}[scale=0.6]
\begin{scope}[shift={(0,0)}]
\draw (1.5,1.5) to (-1,1.5);
\draw (1.5,1.5) to (1.5,4);
\draw (1.5,1.5) to (1.5,-1);
\draw (1.5,1.5) to (4,1.5);
\draw[fill] (1.5,1.5) circle [radius=0.1];

\draw[->] (-3.5,2.5) to (-1.5,2.5);
\node[above] at (-2.5,2.5) {$\phi$};
\draw[<-] (4,2.5) to (6,2.5);
\node[above] at (5,2.5) {$\phi'$};

\draw[red] (1.5,1.5) to (0,4); 
\draw[red] (1.5,1.5) to (-1,2.5); 

\end{scope}

\begin{scope}[shift={(-8,0)}]
\draw (1,2) to (2,1);
\draw (1,2) to (-1,2);
\draw (1,2) to (1,4);
\draw (2,1) to (2,-1);
\draw (2,1) to (4,1);
\draw[fill] (1,2) circle [radius=0.1];
\draw[fill] (2,1) circle [radius=0.1];

\draw[red] (1,2) to (0,4); 
\draw[red] (1,2) to (-1,3); \node[below] at (1,2) {$y_0$};

\end{scope}

\begin{scope}[shift={(8,0)}]
\draw (1,1) to (2,2);
\draw (1,1) to (-1,1);
\draw (1,1) to (1,-1);
\draw (2,2) to (2,4);
\draw (2,2) to (4,2);
\draw[fill] (1,1) circle [radius=0.1];
\draw[fill] (2,2) circle [radius=0.1];

\draw[red] (1.35,1.35) to (-1,3); 
\draw[red] (1.65,1.65) to (0,4); 
\node[right] at (1.75,1.55) {$x_2$};

\draw[fill,red] (1.35,1.35) circle [radius=0.1];
\draw[fill,red] (1.65,1.65) circle [radius=0.1];
\node[right] at (1.25,1.25) {$x_1$};

\end{scope}
\end{tikzpicture}
\caption{The blowups $\phi$ and $\phi'$.}\label{fig:blowups_phi}
\end{center}
\end{figure}
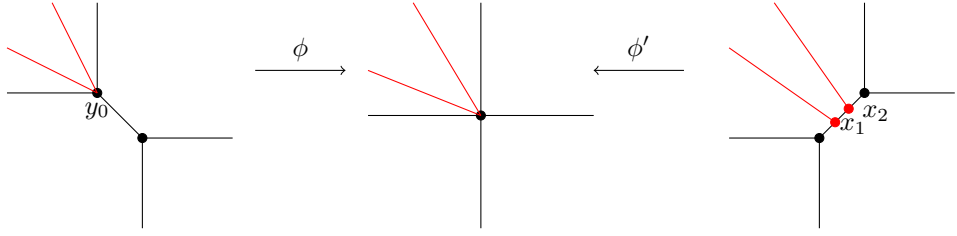
\end{proof}


\section{The resolution of the degree-2 Abel map}

  \subsection{Local resolutions}

  Throughout this section we will fix a regular smoothing $\pi\col\C\ra B$ of a nodal curve $C$ with a section $\sigma\col B\ra \C$ of $\pi$ through its smooth locus.  
 We will perform blowups of $\mc C^2$ along divisors of type $Z\times Z$, where $Z$ is a subcurve of the special fiber $C$. Actually, we will restrict our attention to a special class of subcurves, called \emph{tails}.   
  
\begin{Def}
A \emph{$\delta$-tail} of a nodal curve $C$ is a connected subcurve $Z$ such that $Z^c$ is connected and $|Z\cap Z^c|=\delta$.
\end{Def}

   \begin{Prop}\label{prop:1-tail}
  Let $\mu$ be a polarization of degree $k$ and $\mc L$ be an invertible sheaf of degree $k+2$  over $\C/B$. Assume that the components of $C$ are smooth.  Consider a point $\mc N=(N_1,N_2)\in\mc C^2$, where $N_1,N_2$ are nodes of $C$, with $N_1=Z\cap Z^{c}$ for a $1$-tail $Z$ of $C$. Then the degree-$2$ Abel map $\alpha^2_{\L}\col\C^{2}\dashrightarrow \overline{\J}_{\mu}^\sigma$ is defined at $\mc N$.
  \end{Prop}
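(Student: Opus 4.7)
The plan is to apply Theorem~\ref{thm:tropgeoAbel}(3) at the point $\mc N=(N_1,N_2)$: it suffices to show that for every $(x,y)\in(0,1)^2$ the combinatorial type of $\qs(\D_{x,y})$, with
\[
\D_{x,y}=\D^\dagger_\L-p_{e_1,x}-p_{e_2,y},
\]
is the same. The crucial structural input is that, since $Z$ is a $1$-tail, the edge $e_1$ of $\Gamma$ corresponding to $N_1$ is a bridge.

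The first step is to remove the $x$-dependence. Let $v_1$ be the endpoint of $e_1$ lying in $Z$. I would define a piecewise-linear function $f_x$ on $X_\Gamma$ by setting $f_x\equiv 0$ on $Z$, letting $f_x$ grow linearly with slope $1$ along $e_1$ starting at $v_1$ until reaching the value $x$ at $p_{e_1,x}$, and then extending by the constant $x$ on the remainder of $e_1$ and on $Z^c$. Because $e_1$ is the unique edge joining $Z$ and $Z^c$, this is globally well-defined and continuous. A direct outgoing-slope computation gives $\div(f_x)=v_1-p_{e_1,x}$, so $\D_{x,y}\sim \D_y:=\D^\dagger_\L-v_1-p_{e_2,y}$, and by uniqueness of the quasistable representative in a class we conclude $\qs(\D_{x,y})=\qs(\D_y)$, which is independent of $x$.

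The second step is to show that $\qs(\D_y)$ has combinatorial type independent of $y\in(0,1)$. By Remark~\ref{prop:quasiquasi}, this quasistable representative can be computed on the refinement $\wh{\Gamma}$ of $\Gamma$ obtained by inserting a single vertex $v_y$ in the interior of $e_2$. On $\wh{\Gamma}$ the divisor $D_y$ assigns $-1$ at $v_y$ and a $y$-independent value at each original vertex of $\Gamma$; likewise the extended polarization (with $\mu(v_y)=0$), the distinguished vertex, and the combinatorial structure of $\wh{\Gamma}$ are all independent of $y$. Therefore the linear-equivalence relation and the quasistability inequalities on $\wh{\Gamma}$ coincide for every $y\in(0,1)$, and uniqueness forces $\qs(D_y)$ to assign the same value to each vertex of $\wh{\Gamma}$ for all such $y$. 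The minimal refinement supporting $\qs(\D_y)$ is then either $\Gamma$ (when the common value at $v_y$ vanishes) or $\wh{\Gamma}$ (otherwise), so the combinatorial type is constant on $(0,1)^2$ and Theorem~\ref{thm:tropgeoAbel}(3) concludes.

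The main obstacle is conceptual rather than computational: one must recognize that once the refinement is fixed, quasistabilization becomes a combinatorial chip-firing problem whose solution depends only on the discrete data of vertex values, polarization, and distinguished vertex, which is precisely the content of Remark~\ref{prop:quasiquasi}. The bridge geometry coming from the $1$-tail hypothesis is exactly what allows us to reach such a uniform refinement by first killing the $x$-dependence, explaining why in this case no blowup is needed at $\mc N$ for the Abel map to extend.
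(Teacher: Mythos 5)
Your first step is correct and is essentially the paper's: since $e_1$ is a bridge, $p_{e_1,x}$ is linearly equivalent to an endpoint of $e_1$ (the paper uses $\mc P=p_{e_1,t_1}-p_{e_1,0}$, you exhibit the piecewise-linear function explicitly), so $\qs(\D_{x,y})$ is independent of $x$ and one is reduced to a degree-$1$ problem. The gap is in your second step. You claim that, because $\D_y=\wh{\mc D}^\dagger-p_{e_2,y}$ is supported on the vertices of the refinement $\wh\Gamma$ obtained by inserting the single vertex $v_y$ on $e_2$, its quasistabilization is a chip-firing problem on the fixed finite graph $\wh\Gamma$ with $y$-independent data, hence $\qs(\D_y)$ ``assigns the same value to each vertex of $\wh\Gamma$.'' This is not what Remark \ref{prop:quasiquasi} says: that remark is a criterion for testing whether a \emph{given} divisor is quasistable (its minimal supporting refinement has at most one new vertex per edge and the induced graph divisor is quasistable there); it does not assert that the quasistable representative of a class of a divisor supported on $V(\wh\Gamma)$ is again supported on $V(\wh\Gamma)$. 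That assertion is false in general: linear equivalence on the tropical curve $X$ is a metric notion, and the unique quasistable representative typically has support at interior points of \emph{other} edges, at positions depending on $y$. For a concrete instance, take $\Gamma$ a $3$-cycle with vertices $v_0,v_1,v_2$, unit lengths, $p_0=v_0$, $\mu=0$, and $\D_y=2v_0-v_1-p_{e,y}$ with $e$ the edge from $v_1$ to $v_2$; then $\qs(\D_y)=v_0-q_y$ where $q_y$ is the point of the edge $v_2v_0$ at distance $y$ from $v_2$, which is not a vertex of your $\wh\Gamma$ and moves with $y$. The same phenomenon is visible in the paper's own proof of Theorem \ref{thm:Abel2}, where the support of the quasistable representative lands at points $p_{f,t}$ on edges $f$ in $E(H,H^c)$ for various hemispheres $H$, at positions depending on $t_1,t_2$.

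The statement you actually need — that the \emph{combinatorial type} (which edges get subdivided, and with what multidegree) of $\qs(\wh{\mc D}^\dagger-p_{e_2,y})$ is independent of $y\in(0,1)$, even though the positions of the support points vary — is true, but it is precisely the nontrivial content of the degree-$1$ analysis, which the paper imports by citing \cite[Lemma 5.10]{AAP} and the proof of \cite[Theorem 5.8]{AAP}. Your argument does not substitute for that input; replacing your second paragraph by an appeal to those results (or by redoing the $1$-hemisphere analysis that underlies them) repairs the proof, after which the application of Theorem \ref{thm:tropgeoAbel}(3) goes through exactly as you describe.
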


  \begin{proof}
  Let $\Gamma$ be the dual graph of $C$ and $X=X_\Gamma$ the associated tropical curve with edges of unitary lengths.  
  Let $v_0$ be the vertex of $\Gamma$ corresponding to $P_0=\sigma(0)$, and $p_0\in X$ be the point corresponding to $v_0$. We let $e_1$ and $e_2$ be the edges of $\Gamma$ corresponding to $N_1$ and $N_2$. The tropical Abel map $\alpha^\trop_{2,\D^\dagger_{\mathcal L}}\col X^2\ra J^\trop_{p_0,\mu}$ takes a pair $(p_{e_1,t_1},p_{e_2,t_2})$, for real numbers $t_1,t_2\in (0,1)$, to the class of the divisor on $X$ given by:
\begin{equation}\label{eq:alpha2}
\alpha^\trop_{2,\D^\dagger_{\mathcal  L}}(p_{e_1,t_1},p_{e_2,t_2})=[\mc D^\dagger_\L-p_{e_1,t_1}-p_{e_2,t_2}].
\end{equation}
 We define the divisor  
 $\mc P=p_{e_1,t_1}-p_{e_1,0}$ on $X$.
Since $N_1=Z\cap Z^c$ for a $1$-tail $Z$ of $X$, we have that the graph obtained from $\G$ by removing the edge $e_1$ is not connected. Hence the divisor $\mc P$ on $X$ is principal. So we can write:
\[
\alpha^{\trop}_{2,\D^\dagger_{\mathcal  L}}(p_{e_1,t_1},p_{e_2,t_2})=[\mc{\wh D}^\dagger-p_{e_2,t_2}],
\]
where $\mc{\wh D}^\dagger=\mc D^\dagger_\L-p_{e_1,0}$ (which is a divisor on $X$ induced by a divisor on $\Gamma$). So we reduce ourselves to the case of the degree-$1$ Abel map. As explained in \cite[Lemma 5.10]{AAP} and in the proof of \cite[Theorem 5.8]{AAP}, the combinatorial type of the quasistable divisor on $X$ equivalent to $\mc{\wh D}^\dagger-p_{e_1,t_2}$ is independent of $t_2$. Hence the combinatorial type of the quasistable divisor on $X$ equivalent to $\mc D^\dagger_\L-p_{e_1,t_1}-p_{e_2,t_2}$ is independent of the pairs $t_1,t_2\in (0,1)$. 
By Theorem \ref{thm:tropgeoAbel} (3), we deduce that the Abel map $\alpha^2_{\mc L}$ is already defined at $(N_1,N_2)$.
  \end{proof}

   \begin{Prop}\label{prop:2-tail}
   Let $\mu$ be a polarization of degree $k$ and $\mc L$ an invertible sheaf of degree $k+2$  over $\C/B$. Assume that the components of $C$ are smooth. 
  Let $Z$ be a $2$-tail of $C$ and write $\{N_1,N_2\}=Z\cap Z^c$. Consider the point 
  \[
  \mc N=(N_1,N_2)\in (Z\cap Z^c)\times (Z\cap Z^c)\subset \mc C^2.
  \]
  Let $\phi\col\wt{\mc C}^2\ra \C^2$ be the blowup of $\C^2$ with center $Z\times Z$. Then the rational map 
  \[
 \wt{\alpha}^2_{\L}\col \wt{\mc C}^2\stackrel{\phi}{\longrightarrow} \mc C^2\stackrel{\alpha^2_{\mc L}}{\dashrightarrow}\overline{\mc J}^\sigma_\mu
  \]
is defined along the rational curve $\phi^{-1}(\mc N)\cong\mathbb P^1_k$.
 \end{Prop}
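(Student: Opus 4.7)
The plan is to apply Theorem \ref{thm:tropgeoAbel}(1) with $Z_1 = Z_2 = Z$. The blowup it prescribes is precisely $\phi$, and its conclusion --- that the composition $\alpha^2_\L \circ \phi$ is defined along the exceptional $\Ps^1_k = \phi^{-1}(\mc N)$ --- is exactly the statement to be proved. So the entire proof reduces to verifying the combinatorial hypothesis of that theorem: the combinatorial type of $\qs(\D_{x,y})$ is constant on each of the open triangles $\{0 < x < y < 1\}$ and $\{0 < y < x < 1\}$.

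To establish this, I will, in the spirit of the proof of Proposition \ref{prop:1-tail}, produce a principal divisor that ``pins'' one of the two moving points to a vertex, reducing the situation to the already-understood degree-$1$ case. Write $a_i, b_i$ for the endpoints of $e_i$ with $a_i \in Z$ and $b_i \in Z^c$. Because $Z$ is a $2$-tail, $e_1$ and $e_2$ are the only edges joining $Z$ to $Z^c$, so one can build a piecewise linear function $f$ on $X$ that is constant equal to $c$ on $Z$ and to $c-x$ on $Z^c$, has a single break on $e_1$ at $p_{e_1,x}$ (with slope $-1$ on $[a_1, p_{e_1,x}]$ and slope $0$ afterwards), and two breaks on $e_2$ at $p_{e_2, y-x}$ and $p_{e_2, y}$ (with slopes $0, -1, 0$ from $a_2$ to $b_2$). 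A routine check of outgoing slopes at each break yields
$$\div(f) = p_{e_1,x} + p_{e_2,y} - a_1 - p_{e_2, y-x},$$
and in particular $\D_{x,y} \sim (\D^\dagger_\L - a_1) - p_{e_2, y-x}$ throughout $\{0 < x < y < 1\}$. The right-hand side is a degree-$1$ Abel map divisor whose moving point traces out the interior of the single edge $e_2$ as $y-x$ varies in $(0,1)$. Hence \cite[Lemma 5.10]{AAP}, applied exactly as in the proof of Proposition \ref{prop:1-tail}, gives that the combinatorial type of $\qs$ is independent of $y-x$, i.e.\ constant on this triangle. Swapping the roles of $e_1$ and $e_2$ in $f$ yields $\D_{x,y} \sim (\D^\dagger_\L - a_2) - p_{e_1, x-y}$ and constancy on $\{0 < y < x < 1\}$ by the same argument.

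The only nontrivial step is the construction of $f$; the conceptual content is that the $2$-cut $\{e_1, e_2\}$, completed by paths through $Z$ and $Z^c$, forms a cycle in $\G$, and the principal divisor above encodes the fact that, in the tropical Jacobian, moving a point forward by $x$ along $e_1$ is equivalent to moving a point backward by $x$ along $e_2$. With $f$ in hand, Theorem \ref{thm:tropgeoAbel}(1) applies at $\mc N$ with $Z_1 = Z_2 = Z$ and yields the proposition immediately.
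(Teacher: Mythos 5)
Your proof is correct and follows essentially the same route as the paper: exploit the $2$-cut $\{e_1,e_2\}$ to produce a principal divisor that pins one moving point to a vertex and shifts the other to $p_{e,\,|x-y|}$, then reduce to the degree-$1$ case via \cite[Lemma 5.10]{AAP} and conclude with Theorem \ref{thm:tropgeoAbel}(1). The only cosmetic differences are that you pin the point on $e_1$ where the paper pins the one on $e_2$ (i.e.\ your $\mc P$ is the paper's up to swapping roles), and that you verify principality explicitly via a piecewise linear function rather than citing the disconnection of $\G\setminus\{e_1,e_2\}$.
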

  
  \begin{proof}
  We can keep the set-up of Proposition \ref{prop:1-tail}.  
The tropical Abel map  $\alpha^\trop_{2,\D^\dagger_{\mathcal L}}\col X^2\ra J^\trop_{p_0,\mu}$ is as in Equation \eqref{eq:alpha2}. 
Assume that $t_1>t_2$. We define the divisor on $X$:
\[
\mc P=p_{e_1,t_1}-p_{e_1,t_1-t_2}+p_{e_2,t_2}-p_{e_2,0}.
\]
Since $\{N_1,N_2\}=Z\cap Z^c$ for a $2$-tail $Z$ of $C$, we have that the graph obtained from $\G$ by removing the edges $e_1,e_2$ is not connected. Hence  $\mc P$ is a principal divisor. Then we have
\[
\alpha^{\trop}_{2,\D^\dagger_{\mathcal  L}}(p_{e_1,t_1},p_{e_2,t_2})=[\mc{\wh D}^\dagger-p_{e_1,t}],
\]
where $t=t_1-t_2$ and $\mc{\wh D}^\dagger=\mc D^\dagger_\L-p_{e_2,0}$ (which is a divisor induced by a divisor on $\Gamma$). So we reduce ourselves to the case of the degree-$1$ Abel map: as explained in \cite[Lemma 5.10]{AAP} and in the proof of \cite[Theorem 5.8]{AAP}, the combinatorial type of the quasistable divisor on $X$ equivalent to $\mc{\wh D}^\dagger-p_{e_1,t}$ is independent of $t$. Hence the combinatorial type of the quasistable divisor on $X$ equivalent to $\mc D^\dagger_\L-p_{e_1,t_1}-p_{e_2,t_2}$ is independent of $(t_1,t_2)$ whenever $t_1>t_2$. A similar reasoning can be done for the case $t_1<t_2$. Hence, using Theorem \ref{thm:tropgeoAbel} (1), we conclude that the blowup along $Z\times Z$ gives rise to a resolution of $\alpha^2_{\mc L}$ locally around $\mc N=(N_1,N_2)$.
\end{proof}
  
  \begin{Prop}\label{prop:diagonal}
   Let $\mu$ be a polarization of degree $k$ and $\mc L$ an invertible sheaf of degree $k+2$  over $\C/B$. Assume that the components of $C$ are smooth.  
   Consider the point 
  $\mc N=(N,N) \in \mc C^2$, for a node $N$ of $C$.
  Let $\phi\col\wt{\mc C}^2\ra \C^2$ be the blowup of $\C^2$ with center the diagonal subscheme of $\C^2$. Then the rational map 
  \[
 \wt{\alpha}^2_{\L}\col \wt{\mc C}^2\stackrel{\phi}{\longrightarrow} \mc C^2\stackrel{\alpha^2_{\mc L}}{\dashrightarrow}\overline{\mc J}^\sigma_\mu
  \]
is defined along the rational curve $\phi^{-1}(\mc N)\cong\mathbb P^1_k$.
  \end{Prop}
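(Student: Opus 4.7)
The plan is to adapt the tropical-to-geometric strategy of Propositions \ref{prop:1-tail} and \ref{prop:2-tail} to the situation where both chips sit on the same edge. First I would set up the tropical picture exactly as in those proofs: let $\G$ be the dual graph of $C$, $X=X_\G$ the associated tropical curve with unit edge lengths, $p_0\in X$ the point corresponding to $\sigma(0)$, and $e$ the edge of $\G$ corresponding to the node $N$. For $t_1,t_2\in(0,1)$ the tropical Abel map evaluates to
\[
\alpha^\trop_{2,\D^\dagger_\L}(p_{e,t_1},p_{e,t_2}) \;=\; [\D^\dagger_\L-p_{e,t_1}-p_{e,t_2}].
\]

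The next step is to exhibit the principal divisor that will play the role of the $1$-tail and $2$-tail principal divisors of the previous proofs. In the diagonal case this is a tent function on $e$: assuming without loss of generality that $t_1<t_2$, I would take $f$ to be the continuous function on $X$ which vanishes outside $[p_{e,t_1},p_{e,t_2}]\subset e$ and is piecewise linear there with slope $+1$ on $[p_{e,t_1},p_{e,(t_1+t_2)/2}]$ and slope $-1$ on $[p_{e,(t_1+t_2)/2},p_{e,t_2}]$. A direct outgoing-slope computation gives
\[
\div(f) \;=\; p_{e,t_1}+p_{e,t_2}-2\,p_{e,(t_1+t_2)/2},
\]
so $p_{e,t_1}+p_{e,t_2}\sim 2\,p_{e,s}$ with $s=(t_1+t_2)/2\in(0,1)$, and therefore
\[
\alpha^\trop_{2,\D^\dagger_\L}(p_{e,t_1},p_{e,t_2}) \;=\; [\D^\dagger_\L-2\,p_{e,s}].
\]
Thus the tropical Abel map factors through the midpoint, and the combinatorial type of $\qs(\D^\dagger_\L-p_{e,t_1}-p_{e,t_2})$ depends only on $s$; in particular it is constant along every level set $\{t_1+t_2=\mathrm{const}\}\subset(0,1)^2$.

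The last step is to translate this tropical factorization into the geometric statement about the diagonal blowup. Theorem \ref{thm:tropgeoAbel} does not apply verbatim here because the diagonal blowup is not of the form ``blowup along $Z_1\times Z_2$''. My plan is to argue by analogy with the proof of Theorem \ref{thm:tropgeoAbel} (3): pick auxiliary blowups dominating the diagonal blowup $\phi$ --- for instance along $Z\times Z$ and along $Z\times Z^c$ for a subcurve $Z$ with $N\in Z\cap Z^c$ --- use the tent-function reduction together with \cite[Lemma 5.10]{AAP} to verify via Theorem \ref{thm:tropgeoAbel} (1)--(2) that each of these auxiliary blowups already resolves $\alpha^2_\L$ locally around $\mc N$, and then invoke the Rigidity Lemma together with $\phi_*\mc O_{\wt{\mc C}^2}\cong\mc O_{\mc C^2}$ (which holds since $\phi$ is birational and $\mc C^2$ is normal) to deduce that $\wt\alpha^2_\L$ is defined along $\phi^{-1}(\mc N)\cong\Ps^1_k$.

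The hardest step will be precisely this last transfer. Identifying auxiliary blowups that simultaneously dominate $\phi$ and fit into the $Z_1\times Z_2$ template of Theorem \ref{thm:tropgeoAbel}, and then verifying the required constant-combinatorial-type conditions through the tent-function reduction, will require a careful chart-by-chart analysis of $\wt{\mc C}^2$ in the local coordinates $(x_1-x_2,y_1-y_2)$ at $(N,N)$. Since the tent-function combinatorics on a single edge is qualitatively different from the tail combinatorics underlying Propositions \ref{prop:1-tail} and \ref{prop:2-tail}, some genuinely new bookkeeping beyond those arguments is to be expected.
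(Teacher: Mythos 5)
Your tropical step proves strictly less than what Theorem \ref{thm:tropgeoAbel} requires, and this is a genuine gap. The tent-function computation correctly gives $p_{e,t_1}+p_{e,t_2}\sim 2p_{e,s}$ with $s=(t_1+t_2)/2$, but this only shows that the combinatorial type of $\qs(\D^\dagger_\L-p_{e,t_1}-p_{e,t_2})$ is a function of $t_1+t_2$, i.e.\ constant on each one-dimensional level set. Theorem \ref{thm:tropgeoAbel}\,(2) demands constancy on the two-dimensional regions $\{t_1+t_2<1\}$ and $\{t_1+t_2>1\}$, so you still owe the claim that the combinatorial type of $\qs(\D^\dagger_\L-2p_{e,s})$ does not vary as $s$ ranges over $(0,1/2)$, resp.\ $(1/2,1)$. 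Your reduction to a divisor of the form $\D^\dagger_\L-2p_{e,s}$ does not plug into the degree-$1$ machinery of \cite[Lemma 5.10]{AAP}, which handles a \emph{vertex-supported} divisor minus a \emph{single} moving point. The paper instead uses the principal divisor $\mc P=p_{e,0}-p_{e,t_1}-p_{e,t_2}+p_{e,t_1+t_2}$ (and $\mc P=p_{e,1}-p_{e,t_1}-p_{e,t_2}+p_{e,t_1+t_2-1}$ when $t_1+t_2>1$), which rewrites the class as $\wh{\D}^\dagger-p_{e,t}$ with $\wh{\D}^\dagger=\D^\dagger_\L-p_{e,0}$ vertex-supported and $t=t_1+t_2$; the degree-$1$ result then gives constancy for all $t\in(0,1)$ at once, which is exactly the hypothesis of Theorem \ref{thm:tropgeoAbel}\,(2). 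Replacing your tent function by this ``shift to the endpoint'' closes the gap.

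Your final step is also misdirected. Theorem \ref{thm:tropgeoAbel}\,(2) \emph{does} apply: locally at $(N,N)$ the total space $\C^2$ is the threefold ordinary double point $x_1y_1=x_2y_2$, and the diagonal $\{x_1-x_2=y_1-y_2=0\}$ and the divisor $Z\times Z^c=\{y_1=x_2=0\}$ meet only at the point $(N,N)$, hence lie in the same ruling of the local quadric cone and define the same small modification; so the diagonal blowup is, locally at $(N,N)$, the blowup along $Z_1\times Z_2^c$ with $Z_1=Z_2=Z$, which is the case covered by item (2) (this is the content of Figure \ref{fig:blowup-square1}, where the anti-diagonal subdivision of the unit square corresponds to that blowup). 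By contrast, your proposed route through two auxiliary blowups plus the Rigidity Lemma cannot work as stated: to invoke Theorem \ref{thm:tropgeoAbel}\,(1) for the $Z\times Z$ blowup you would need constancy of the combinatorial type on $\{t_1<t_2\}$ and $\{t_1>t_2\}$, which fails in general in the diagonal situation; and if both auxiliary hypotheses did hold, the rigidity argument of item (3) would show that $\alpha^2_\L$ is already defined at $(N,N)$ with no blowup at all, which is a different (and generally false) conclusion.
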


  \begin{proof}
     We can keep the set-up of Proposition \ref{prop:1-tail}. The tropical Abel map  $\alpha^\trop_{2,\D^\dagger_{\mathcal L}}\col X^2\ra J^\trop_{p_0,\mu}$  is as in Equation \eqref{eq:alpha2}, where $e:=e_1=e_2$.
Assume that $t_1+t_2<1$. We define the principal divisor on $X$:
\[
\mc P=p_{e,0}-p_{e,t_1}-p_{e,t_2}+p_{e,t_1+t_2}.
\]
Then we have
\[
\alpha^{\trop}_{2,\D^\dagger_{\mathcal  L}}(p_{e,t_1},p_{e,t_2})=[\mc{\wh D}^\dagger-p_{e,t}],
\]
where $\mc{\wh D}^\dagger=\mc D^\dagger_{\mc L}-p_{e,0}$ and $t=t_1+t_2$.   So we reduce ourselves to the case of the degree-$1$ Abel map: as explained in \cite[Lemma 5.10]{AAP} and in the proof of \cite[Theorem 5.8]{AAP}, the combinatorial type of the quasistable divisor on $X$ equivalent to $\mc{\wh D}^\dagger-p_{e,t}$ is independent of $t$. Hence the combinatorial type of the quasistable divisor on $X$ equivalent to $\mc D^\dagger_\L-p_{e,t_1}-p_{e,t_2}$ is independent of $(t_1,t_2)$ whenever $t_1+t_2<1$.\par
 The reasoning is similar for $t_1+t_2>1$: we just consider 
 \[
 \mc P=p_{e,1}-p_{e,t_1}-p_{e,t_2}+p_{e,t_1+t_2-1},
 \]
 $\mc{\wh D}^\dagger=\mc D^\dagger_{\mc L}-p_{e,1}$ and  $t=t_1+t_2-1$, so that 
 \[
\alpha^{\trop}_{2,\D^\dagger_{\mathcal  L}}(p_{e,t_1},p_{e,t_2})=[\mc{\wh D}^\dagger-p_{e,t}].
\]
By Theorem \ref{thm:tropgeoAbel} (2), we deduce that the blowup along the diagonal subscheme of $\C^2$ gives rise to a resolution of $\alpha^2_{\mc L}\circ \phi$ locally around $\mc N=(N,N)$.
  \end{proof}
  
  \subsection{The resolution of the Abel-Jacobi map}
  
  Our main goal is to give a complete resolution of the degree-$2$ Abel-Jacobi map of any nodal curve, namely the map taking a pair $(Q_1,Q_2)$ of points on a curve $C$ to $\mathcal O_C(2P_0-Q_1-Q_2)$ for a given smooth point $P_0$ of $C$. This is done in Theorem \ref{thm:Abel2}. Before, we need two results. 
  
  \begin{Lem}\label{lem:perfect}
  Let $Z$ be a $\delta$-tail of a curve $C$.
\begin{itemize}
    \item[(1)]  If $Z\cap Z^c\subset Z'$ for some tail $Z'$ of $C$, then either $Z\subset Z'$ or $Z^c\subset Z'$.
    \item[(2)] If $|(Z\cap Z^c \cap Z'\cap (Z')^c|=\delta-1$ for some tail $Z'$ of $C$, then one of the following conditions holds:
    \[
    Z\subset Z', \;\; Z'\subset Z, \;\; Z^c\subset Z',\;\; Z'\subset Z^c.
    \]
\end{itemize} 
  \end{Lem}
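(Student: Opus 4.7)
The approach is to translate the statement into the dual graph $\Gamma$ of $C$. Tails of $C$ correspond to subsets $V \subset V(\Gamma)$ such that both $V$ and $V^c$ induce connected subgraphs, and the boundary set $Z \cap Z^c$ corresponds to the edge set $E(V, V^c)$, so the $\delta$-tail condition becomes $|E(V, V^c)| = \delta$. Let $V, V'$ be the vertex sets associated to $Z, Z'$, and partition $V(\Gamma)$ into the four (possibly empty) pieces
\[
A = V \cap V', \qquad B = V \cap (V')^c, \qquad C = V^c \cap V', \qquad D = V^c \cap (V')^c,
\]
so that $V' = A \cup C$ and $(V')^c = B \cup D$. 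The four inclusions $Z \subset Z'$, $Z' \subset Z$, $Z^c \subset Z'$, $Z' \subset Z^c$ correspond respectively to $B = \emptyset$, $C = \emptyset$, $D = \emptyset$, $A = \emptyset$.

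For part (1), the hypothesis that every node in $Z \cap Z^c$ lies in $Z'$ translates to: every edge in $E(V, V^c)$ has at least one endpoint in $V'$, i.e. there are no edges between $B$ and $D$. Since $(V')^c = B \cup D$ is connected (because $Z'$ is a tail), if both $B$ and $D$ were nonempty the induced subgraph on $(V')^c$ would be disconnected. Hence one of $B, D$ is empty, yielding $Z \subset Z'$ or $Z^c \subset Z'$.

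For part (2), I classify the $\delta$ edges in $E(V, V^c)$ by the $V'/(V')^c$-membership of their endpoints: an edge of type $A$-$D$ or $B$-$C$ (call it \emph{diagonal}) lies in $E(V', (V')^c)$ as well, whereas an edge of type $A$-$C$ or $B$-$D$ (call it \emph{straight}) does not. The hypothesis $|Z \cap Z^c \cap Z' \cap (Z')^c| = \delta - 1$ thus says there are exactly $\delta - 1$ diagonal edges and therefore exactly one straight edge. If that straight edge is of type $A$-$C$, then there are no $B$-$D$ edges at all, so the connectedness of $(V')^c = B \cup D$ forces $B = \emptyset$ or $D = \emptyset$, giving $Z \subset Z'$ or $Z^c \subset Z'$. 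If instead the straight edge is of type $B$-$D$, then there are no $A$-$C$ edges, and the connectedness of $V' = A \cup C$ forces $A = \emptyset$ or $C = \emptyset$, giving $Z' \subset Z^c$ or $Z' \subset Z$.

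The only subtle point is the dictionary between a node lying in a subcurve and the endpoints of the corresponding edge lying in a given vertex set (noting in particular that loops contribute to neither $|Z \cap Z^c|$ nor to $E(V,V^c)$); once this is in place, the argument reduces to the elementary fact that a connected graph cannot be partitioned into two nonempty vertex sets with no edges between them, so there is no real obstacle beyond the case analysis.
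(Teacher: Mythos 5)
Your proof is correct. The paper itself does not prove this lemma; it simply cites \cite[Lemma 2.4]{P}, so there is no internal argument to compare against, but your self-contained graph-theoretic proof does exactly what is needed. The dictionary you set up is the right one: a tail corresponds to a subset $V\subset V(\Gamma)$ with $V$ and $V^c$ inducing connected subgraphs, $Z\cap Z^c$ corresponds to $E(V,V^c)$, a node of $Z\cap Z^c$ lies in $Z'$ precisely when its edge has an endpoint in $V'$, and a node lies in $Z\cap Z^c\cap Z'\cap(Z')^c$ precisely when its edge lies in $E(V,V^c)\cap E(V',(V')^c)$, i.e.\ is what you call diagonal with respect to the partition $A,B,C,D$. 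In part (1) the hypothesis kills all $B$--$D$ edges, and since every $B$--$D$ edge of $\Gamma$ automatically lies in $E(V,V^c)$ (as $B\subset V$, $D\subset V^c$), connectedness of the induced subgraph on $(V')^c=B\cup D$ forces $B=\emptyset$ or $D=\emptyset$; in part (2) the count $\delta-1$ leaves exactly one straight edge, and whichever type it has, the other type is absent, so connectedness of $V'$ or of $(V')^c$ applies. The only care needed is the one you flag (nodes on a single component, i.e.\ loops, and edges internal to $V$ or $V^c$ never enter $Z\cap Z^c$), and you handle it correctly. This gives the paper a proof it currently outsources, at no extra cost in length.
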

  
  \begin{proof}
  See \cite[Lemma 2.4]{P}.
  \end{proof}

  \begin{Lem}
  Let $P_0$ be a smooth point of $C$.  
 Let $\mc T=(Z_1,\dots,Z_h)$ be a sequence of tails of $C$, where $Z_i$ is a $k_i$-tail with $k_i\in\{2,3\}$ and $P_0\not\in Z_i$. Consider the sequence of blowups
 \[
\phi_{\mc T}\col\wt\C^2_h\stackrel{\phi_h}{\lra}\cdots \stackrel{\phi_3}{\lra}\wt\C^2_2\stackrel{\phi_2}{\lra}\wt\C^2_1\stackrel{\phi_1}{\lra}\wt\C^2_0\stackrel{\phi_0}{\lra}\C^2
  \]
 where $\phi_0$ is the blowup of $\C^2$ along its diagonal subscheme and $\phi_i$ is the blowup of $\wt{\C}^2_{i-1}$ along the strict transform of the divisor $Z_i\times Z_i$ of $\C^2$ via $\phi_1\circ\cdots\circ \phi_{i-1}$. Then $\phi_{\mc T}$ is independent of the ordering of the sequence $\mc T$.
\end{Lem}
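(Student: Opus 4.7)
The plan is to prove independence of ordering by a local analysis at the finitely many singular points of $\mathcal C^2$. These are the points $(N_1,N_2)$ with $N_1,N_2$ both nodes of $C$, at which $\mathcal C^2$ is formally isomorphic to the ordinary double point (ODP) $\Spec k[x_1,y_1,x_2,y_2]/(x_1 y_1 - x_2 y_2)$. Since blowups along Cartier divisors are isomorphisms, each $\phi_i$ acts non-trivially only over points where the strict transform of its center fails to be Cartier, and Weil divisors on a smooth scheme are Cartier, so such points exist only over these ODPs. It thus suffices to show that at each ODP, the non-Cartier Weil divisors among $\Delta,Z_1\times Z_1,\ldots,Z_h\times Z_h$ all lie in the same ruling of the ODP: the first blowup among them produces the small resolution associated to that ruling, after which every remaining strict transform is Cartier, so the remaining $\phi_i$ act as isomorphisms over this point.

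At a diagonal singular point $(N,N)$, a direct calculation gives $\Delta=(x_1-x_2,\,y_1-y_2)$, which lies in one ruling of the ODP, while every $Z_i\times Z_i$ through $(N,N)$ has local ideal $(y_1,y_2)$ or $(x_1,x_2)$ (according to which branch of $C$ at $N$ lies in $Z_i$), and so lies in the opposite ruling. Since $\phi_0$ is performed first, it resolves each such ODP before any tail blowup; the strict transforms of the $Z_i\times Z_i$ become Cartier over $(N,N)$, and the subsequent $\phi_i$ act as isomorphisms there. Diagonal points thus contribute nothing to the ordering question.

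The essential case is a singular point $(N_1,N_2)$ with $N_1\neq N_2$; here $\Delta$ does not pass through, and the non-Cartier centers are exactly those $Z_i\times Z_i$ with $\{N_1,N_2\}\subset Z_i\cap Z_i^c$. Given two such tails $Z_i, Z_j$, I would invoke Lemma~\ref{lem:perfect} to constrain their geometric relationship: part (1) applied to two $2$-tails, together with $P_0\notin Z_i,Z_j$ to rule out $Z_j=Z_i^c$, forces $Z_i=Z_j$; part (2) applied to any $3$-tail among $Z_i,Z_j$ (using $|Z_i\cap Z_i^c\cap Z_j\cap Z_j^c|=2=\delta-1$) forces one of $Z_i\subset Z_j$, $Z_j\subset Z_i$, $Z_i^c\subset Z_j$, or $Z_i\subset Z_j^c$. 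In the first two cases, the $Z_j$-branch at each of $N_1,N_2$ coincides with the $Z_i$-branch and the local ideal of $Z_j\times Z_j$ equals $(y_1,y_2)$; in the latter two, the $Z_j$-branch is the $Z_i^c$-branch at each shared node, giving ideal $(x_1,x_2)$. Both ideals lie in the same ruling of the ODP as $Z_i\times Z_i$.

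The main obstacle will be this branch-by-branch verification, which requires checking that the inclusions of Lemma~\ref{lem:perfect} force a consistent side-choice for $Z_j$ at both $N_1$ and $N_2$ (as opposed to a mixed pattern which would send $Z_j\times Z_j$ into the opposite ruling and make the order of blowup matter). Consistency follows because the inclusions are global: $Z_j\subset Z_i$, for instance, forces the $Z_j$-branch at every shared node to lie in $Z_i$, hence to coincide with the $Z_i$-branch, and analogously for the other three inclusions. Once this is established, the first $\phi_i$ reaching a given ODP produces the unique small resolution along the common ruling, all subsequent $\phi_i$ are isomorphisms over that ODP, and the final space $\phi_{\mathcal T}\colon\widetilde{\mathcal C}^2_h\to\mathcal C^2$ depends only on the set $\{Z_1,\ldots,Z_h\}$, proving the lemma.
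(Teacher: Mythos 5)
Your proposal is correct and follows essentially the same route as the paper: both arguments reduce to showing that any two tail-centers $Z_i\times Z_i$, $Z_j\times Z_j$ passing through a common non-diagonal singular point $(N_1,N_2)$ determine the same ruling of the local ordinary double point, and both extract this from Lemma \ref{lem:perfect} (the paper phrases it as a contradiction with the ``mixed'' branch pattern $C_1\cup C_2\subset Z_i$, $C_1\cup C_2'\subset Z_j$, using $P_0\notin Z_j$ to kill the case $Z_i^c\subset Z_j$, while you argue the contrapositive in explicit local coordinates). Your explicit justification that only the ODPs matter and your separate treatment of the diagonal points are left implicit in the paper, but the mathematical content is the same.
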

  
  \begin{proof}
  Assume that a permutation $\T'$ of $\T$ gives rise to a blowup $\phi_{\T'}$ different from $\phi_{\T}$. This implies that, locally at a point $\mc N=(N_1,N_2)$ with $N_1,N_2$ distinct nodes of $C$, the blowups $\phi_{\T}$ and $\phi_{\T'}$ are different.
  We can assume that locally at $\mc N$, the blowup $\phi_{\T}$ has center $Z_i\times Z_i$ and the blowup $\phi_{\mc T'}$ has center $Z_j\times Z_j$, for $i,j\in\{1,\dots,h\}$, so that we have $Z_i\ne Z_j$ and
  \[
  \{N_1,N_2\}\subset Z_i\cap Z_i^c\cap Z_j\cap Z_j^c.
  \]
  By Lemma \ref{lem:perfect}, one of the following conditions holds:
  \begin{equation}\label{eq:perfect}
  Z_i\subset Z_j, \;\; Z_j\subset Z_i, \;\; Z^c_i\subset Z_j, \;\; Z_j\subset Z_i^c.
  \end{equation}
  Let $C_1$, $C'_1$, $C_2$, $C'_2$ be the components of $C$ such that $N_1\in C_1\cap C'_1$ and $N_2\in C_2\cap C'_2$. Since $\phi_{\T}$ and $\phi_{\mc T'}$ are different locally at $\mc N$, we can assume, without loss of generality, that $C_1\cup C_2\subset Z_i$ and $C_1\cup C_2'\subset Z_j$.
  Then we have 
  \[
  C_1\subset Z_i\cap Z_j, \;\;  C_2\subset Z_i\cap Z_j^c, \;\; C'_2\subset Z_i^c\cap Z_j.
  \]
   On the other hand:
   \begin{enumerate}
       \item since $C_1\subset Z_i\cap Z_j$, it follows that $Z_j\not \subset Z_i^c$.
       \item since $C_2\subset Z_i\cap Z_j^c$, it follows that $Z_i\not\subset Z_j$.
       \item since $C_2'\subset Z_i^c\cap Z_j$, it follows that $Z_j\not\subset Z_i$.
       \item since  $P_0\in Z_i^c\setminus Z_j$, it follows that $Z_i^c\not\subset Z_j$.
         \end{enumerate} 
           This contradicts Equation \eqref{eq:perfect}.
  \end{proof}

  \begin{Thm}[Degree-$2$ Abel-Jacobi map]\label{thm:Abel2}
  Let $\pi\col\C\ra B$ be a regular smoothing of a nodal curve $C$. Let $\sigma\col B\ra \C$ be a section of $\pi$ through its smooth locus and $\mu$ be the trivial degree-$0$ polarization. Let $Z_1,\dots,Z_N$ be the 2-tails and the 3-tails of $C$ which do not contain $\sigma(0)$. Consider the sequence of blowups
  \[
\wt\C^2_N\stackrel{\phi_N}{\lra}\cdots \stackrel{\phi_2}{\lra}\wt\C^2_1\stackrel{\phi_1}{\lra}\wt\C^2_0\stackrel{\phi_0}{\lra}\C^2
  \]
  where $\phi_0$ is the blowup of $\C^2$ along its diagonal subscheme and $\phi_i$ is the blowup of $\wt{\C}^2_{i-1}$ along the strict transform of the divisor $Z_i\times Z_i$ of $\C^2$ via $\phi_0\circ\cdots\circ \phi_{i-1}$. Then the rational map
  \[
\alpha^2_{\mc O_\C}\circ\phi_0\circ\cdots\circ\phi_N\col \wh\C^2_N\dra\overline{\mc J}^\sigma_\mu
  \]
  is a morphism, i.e., it is defined everywhere.
  \end{Thm}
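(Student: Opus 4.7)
The plan is to verify that the composite $\alpha^2_{\mathcal O_\C}\circ\phi_0\circ\cdots\circ\phi_N$ is defined at every point of $\wt\C^2_N$. Since $\alpha^2_{\mathcal O_\C}$ is a morphism over the open locus of $\C^2$ where at least one coordinate is smooth, and each $\phi_i$ is an isomorphism outside its center, it suffices to analyze definedness above pairs $(N_1,N_2)\in\C^2$ with both $N_1,N_2$ nodes of $C$. I would split the analysis into three cases: (a) the diagonal $N_1=N_2$, (b) $N_1\neq N_2$ with at least one of them on a $1$-tail of $C$, and (c) $N_1\neq N_2$ with neither on a $1$-tail.

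Cases (a) and (b) follow at once from the local analysis already carried out. By Proposition~\ref{prop:diagonal}, the first blowup $\phi_0$ resolves (a), since $\alpha^2_{\mathcal O_\C}\circ\phi_0$ is defined along $\phi_0^{-1}(N,N)\cong\Ps^1_k$, and the subsequent birational blowups preserve definedness on this exceptional divisor. For (b), Proposition~\ref{prop:1-tail} shows that $\alpha^2_{\mathcal O_\C}$ is already defined at $(N_1,N_2)$, hence also after all the blowups.

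The main content is case (c). For $N_1\neq N_2$ with corresponding edges $e_1,e_2\in E(\G)$, consider $\D_{x,y}=-p_{e_1,x}-p_{e_2,y}$ on $X_\G$ (note $\D^\dagger_{\mathcal O_\C}=0$ since $\L$ is trivial). By Theorem~\ref{thm:tropgeoAbel}, the behavior of $\alpha^2_{\mathcal O_\C}$ at $(N_1,N_2)$ is controlled by how the combinatorial type of $\qs(\D_{x,y})$ varies on $(0,1)^2$. The crux is the following combinatorial claim: the type is constant on each of the triangles $\{0<x<y<1\}$ and $\{0<y<x<1\}$, and it jumps across the diagonal $x=y$ if and only if there exists a $2$-tail or $3$-tail $Z$ of $C$ with $\{N_1,N_2\}\subset Z\cap Z^c$ and $\sigma(0)\notin Z$. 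Granting the claim, Theorem~\ref{thm:tropgeoAbel}(1) (which specializes to Proposition~\ref{prop:2-tail} when $Z$ is a $2$-tail with $\{N_1,N_2\}=Z\cap Z^c$) implies that the blowup along $Z\times Z$ resolves the map locally at $(N_1,N_2)$; when no such $Z$ exists, Theorem~\ref{thm:tropgeoAbel}(3) shows the map is already defined there.

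The hard part is the combinatorial claim. I would prove it by arguments parallel to those in Propositions~\ref{prop:1-tail} and \ref{prop:2-tail}: within each triangle, find a principal divisor on $X_\G$ that transports $p_{e_1,x}$ and $p_{e_2,y}$ onto vertices of $\G$, reducing to a degree-$1$ question settled by \cite[Lemma 5.10]{AAP}. The walls along $x=y$ where the combinatorial type jumps correspond to tropical subcurves $Y$ whose quasistability inequality $\sum_{p\in Y}(\D(p)-\mu(p))+\delta_Y/2\geq 0$ becomes tight; with $\D_{x,y}$ of total degree $-2$ and $\mu\equiv 0$, such $Y$ must satisfy $\delta_Y\leq 3$, hence correspond to $2$-tails or $3$-tails, and the exclusion $\sigma(0)\notin Z$ comes from the strict inequality required at $p_0$ in the definition of $(p_0,\mu)$-quasistability. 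Finally, the lemma immediately preceding the theorem ensures that $\phi_{\mathcal T}$ is independent of the order of the $Z_i$'s, so performing the full sequence resolves every pair of nodes simultaneously.
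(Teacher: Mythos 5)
Your high-level architecture agrees with the paper's: reduce to pairs of nodes $(N_1,N_2)$, dispose of the diagonal and the $1$-tail cases via Propositions \ref{prop:diagonal} and \ref{prop:1-tail}, and reduce everything else to a constancy statement for the combinatorial type of $\qs(\D_{x,y})$ that feeds into Theorem \ref{thm:tropgeoAbel}; your combinatorial claim (constancy on the two triangles, with a possible jump only across $x=y$, governed by $2$- and $3$-tails whose boundary contains both nodes and which avoid $\sigma(0)$) is indeed the statement the paper establishes. The gap is that neither of your two proposed arguments for this claim actually proves it, and the claim is the entire technical content of the theorem. The ``transport both points to vertices by a principal divisor and reduce to degree $1$'' device works only in the special situations of Propositions \ref{prop:1-tail}, \ref{prop:2-tail} and \ref{prop:diagonal}, where a single $1$- or $2$-tail through the relevant node(s) supplies the principal divisor. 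In general no principal divisor on $X_\Gamma$ moves both $p_{e_1,x}$ and $p_{e_2,y}$ onto $V(\Gamma)$ within the same class: the quasistable representative genuinely has support at fractional positions (at distances $t_1$, $\tfrac{1-t_1}{2}$, $t_2-t_1$, $1-t_2$, \dots) along a whole chain of edges of the dual graph. The paper's proof instead invokes Theorem \ref{thm:convert} (the explicit hemisphere decomposition from \cite{P} of the quasistable divisor equivalent to $2v_0-v_1-v_2$) and then, in a case division according to whether the edges incident to $v_{e_1}$ and $v_{e_2}$ meet the boundaries of the minimal free $2$- and $3$-hemispheres $K_{2,1}$ and $K_{3,1}$, constructs a refinement $\Gamma'$ of $\Gamma_0$ and a length function $\ell'$ making every correcting divisor $\mc P_H$, $H\in\mc F_{\Gamma'}(v_0,v_{e_1},v_{e_2})$, principal. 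This requires the nested chains $H_{2,1}\subset\cdots\subset H_{2,m_2}$ and $H_{3,1}\subset\cdots\subset H_{3,m_3}$, a parity analysis of the sequence $f_1,\dots,f_k$ of edges met by consecutive free $2$-hemispheres, and Lemma \ref{lem:intersect23} on intersections of $2$- and $3$-hemispheres. None of this is recoverable from your sketch.

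Your fallback wall-crossing heuristic also does not close the gap: the subcurves $Y$ for which the quasistability inequality becomes tight are tropical subcurves of $X$ with boundary points in edge interiors, and the bound $\delta_Y\le 3$ does not follow from the total degree and $\mu\equiv 0$ alone; identifying the unique wall in the open square as $x=y$ and matching it to a $2$- or $3$-tail is precisely what the hemisphere machinery accomplishes. Two smaller points: the paper uses the Abel--Jacobi normalization $\D^\dagger_{\mc O_\C}=2p_0$ (the map sends $(Q_1,Q_2)$ to $\mc O_C(2P_0-Q_1-Q_2)$), not $\D^\dagger=0$ as you wrote, and this matters because quasistability is not translation-invariant; and the reduction to pairs of nodes is justified in the paper by openness of quasistability (\cite[Proposition 34]{EE01}) together with a preliminary reduction to the case of smooth components via \cite[Theorem 1.3]{P}, a step you omit.
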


Before proving the theorem, we need to recall a result in \cite{P} describing how to convert the sheaf $\mc O_C(2P_0-Q_1-Q_2)$ into a $(\sigma,\mu)$-quasistable sheaf, where $P_0,Q_1,Q_2$ are smooth points of  the nodal curve $C$. We will give the graph-theoretical equivalent of this result, which suits better with our purposes. More precisely, given a graph $\G$ and vertices $v_0,v_1,v_2\in V(\G)$, we will describe the $(v_0,\mu)$-quasistable divisor on $\G$ equivalent to $2v_0-v_1-v_2$ (see Theorem \ref{thm:convert}).

Let $\Gamma$ be a graph. Given a subset $V\subset V(\Gamma)$, we denote by $\Gamma(V)$ the subgraph of $\Gamma$ whose set of vertices is $V$ and whose edges are the edges of $\Gamma$ connecting two (possibly coinciding) vertices of $V$. 

\begin{Def}
A \emph{hemisphere} of $\Gamma$ is a subset $H\subset V(\Gamma)$ such that $\Gamma(H)$ and $\Gamma(H^c)$ are connected subgraphs of $\Gamma$. A $\delta$-hemisphere of $\Gamma$ is a hemisphere $H$ such that $|E(H,H^c)|=\delta$. 
\end{Def}

 We denote by $\mc H_{\Gamma,\delta}$ the set of $\delta$-hemispheres of $\Gamma$. Given subsets $V,W\subset V(\Gamma)$, we define:
\[
\mc H_{\Gamma,\delta}(V,W):=\{H \in \mathcal H_{\Gamma,\delta} | V\subset H^c \text{ and } W\subset H\}.
\]

Let $S$ be a finite set. We say that a set $\mc H$ of subsets of $S$ is union-closed (respectively, intersection-closed) if  $H_1\cup H_2\in \mc H$ (respectively, $H_1\cap H_2\in \mc H$) for every $H_1,H_2\in \mc H$. We note that every non-empty intersection-closed set has a unique minimal element and every non-empty union-closed set has a unique maximal element.

\begin{Prop}
\label{prop:closed}
Let $\Gamma$ be a graph and $v_0,v_1,v_2$ vertices of $\Gamma$.
Then the sets $\mc H_{\Gamma,1}(v_0,v_1)$ and $\mc H_{\Gamma,2}(v_0,\{v_1,v_2\})$ are union-closed and intersection-closed.
\end{Prop}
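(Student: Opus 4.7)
The containment conditions $v_0\in H^c$, $v_1\in H$ (and $v_2\in H$ in the second case) are manifestly preserved by intersection and union, so the task reduces to showing that if $H_1,H_2$ are two $\delta$-hemispheres of the required form, then $H_1\cap H_2$ and $H_1\cup H_2$ are again $\delta$-hemispheres. My plan is to partition $V(\Gamma)$ into the four blocks
\[
A=H_1\cap H_2,\quad B=H_1\setminus H_2,\quad C=H_2\setminus H_1,\quad D=H_1^c\cap H_2^c,
\]
and to combine an edge-count with the connectivity built into the definition of a hemisphere.

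Let $a,b,c,d,e,f$ count the edges between the pairs $(A,B),(A,C),(A,D),(B,C),(B,D),(C,D)$, respectively. A direct verification gives
\[
|\partial H_1|=b+c+d+e,\qquad |\partial H_2|=a+c+d+f,
\]
\[
|\partial(H_1\cap H_2)|=a+b+c,\qquad |\partial(H_1\cup H_2)|=c+e+f,
\]
so summing yields the submodularity identity $|\partial(H_1\cap H_2)|+|\partial(H_1\cup H_2)|=2\delta-2d$. The comparable case $B=\emptyset$ or $C=\emptyset$ is immediate, so I may assume $B,C\neq\emptyset$; since $W\subset A$ and $v_0\in D$, all four blocks are then nonempty. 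Connectedness of $\Gamma(H_1),\Gamma(H_2),\Gamma(H_1^c),\Gamma(H_2^c)$ forces $a,b,e,f\geq 1$, since each of these induced subgraphs is partitioned into two nonempty vertex sets that must be joined by at least one edge.

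For $\delta=1$, the relation $b+c+d+e=1$ together with $b,e\geq 1$ is impossible, so no two elements of $\mc H_{\Gamma,1}(v_0,v_1)$ are incomparable; the set is a chain under inclusion, and closure under $\cap$ and $\cup$ is automatic. For $\delta=2$, the same inequalities force $a=b=e=f=1$ and $c=d=0$, so $|\partial(H_1\cap H_2)|=|\partial(H_1\cup H_2)|=2$. The four required connectedness conditions then fall out of the ``single crossing edge'' configuration: $\Gamma(H_1\cup H_2)$ and $\Gamma((H_1\cap H_2)^c)=\Gamma(H_1^c\cup H_2^c)$ are unions of two connected subgraphs sharing vertices in $W$ and at $v_0$ respectively, while $\Gamma(A)$ and $\Gamma(D)$ are connected because the connected graphs $\Gamma(H_1)=\Gamma(A\cup B)$ and $\Gamma(H_1^c)=\Gamma(C\cup D)$ are each split by exactly one crossing edge, so neither piece can itself disconnect.

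The main obstacle I anticipate is just the bookkeeping step of recognizing that the hemisphere axioms force each of $a,b,e,f$ to be strictly positive in the incomparable regime; once this is in place, the $\delta=1$ case collapses to a total ordering, and the $\delta=2$ case collapses via submodularity to exactly the configuration in which preservation of connectedness is automatic.
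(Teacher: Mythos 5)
Your argument is correct and complete. The paper itself does not prove this proposition but simply cites \cite[Lemma 4.3]{CE} and \cite[Section 3 and Proposition 3.1]{P}, so there is no in-paper proof to compare against; your submodularity computation $|\partial(H_1\cap H_2)|+|\partial(H_1\cup H_2)|=|\partial H_1|+|\partial H_2|-2d$ combined with the observation that connectivity of $\Gamma(H_i)$ and $\Gamma(H_i^c)$ forces $a,b,e,f\geq 1$ in the incomparable regime is essentially the standard argument behind those references. All the steps check out: the edge-count identities are right, the $\delta=1$ case correctly collapses to comparability, and in the $\delta=2$ case the forced configuration $a=b=e=f=1$, $c=d=0$ does yield the four connectivity statements exactly as you argue (the ``single crossing edge'' point is valid because a path joining two components of $\Gamma(A)$ inside $\Gamma(A\cup B)$ would need at least two $A$--$B$ edges).
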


\begin{proof}
See \cite[Lemma 4.3]{CE} and \cite[Section 3 and Proposition 3.1]{P}.
\end{proof}

\begin{Def}
Given subsets $V,W\subset V(\Gamma)$, we say that $W$ is \emph{$V$-free} if $E(V,V^c)\cap E(W,W^c)=\emptyset$. 
\end{Def}

\begin{Rem}\label{rem:H1H2}
A $1$-hemisphere $H$ is $H'$-free for every $\delta$-hemisphere $H'\ne H, H^c$.
 If $H_1,H_2\subset V(\Gamma)$ are $V$-free hemispheres, for some $V\subset V(\G)$, then $H_1\cap H_2$ and $H_1\cup H_2$ are also $V$-free.
\end{Rem}

\begin{Def}
Let $\G'$ be a subdivision of $\G$. Let $V$ be a subset of $V(\Gamma')$. We say that an edge $e\in E(\Gamma)$ is \emph{fully contained} in $V$ if $V$ contains the vertices incident to $e$ and all the vertices over $e$. Note that when $\Gamma'=\Gamma$, this simply means that $e\in E(V,V)$.
\end{Def}

\begin{Lem}\label{lem:intersect23}
  Let $H_2$ and $H_3$ be a $2$-hemisphere and a $3$-hemisphere. Write $E(H_2,H_2^c)=\{f_1,f_2\}$ and $E(H_3,H_3^c)=\{e_1,e_2,e_3\}$. Assume that the intersection $H=H_2\cap H_3$ is non-empty and properly contained in $H_2$ and $H_3$. Assume that $H_2\cup H_3\neq V(\Gamma)$. Then, up to reordering the indices, one of the following  properties hold
    \begin{enumerate}
        \item $H$ is a $2$-hemisphere such that  $E(H,H^c)=\{f_1,e_1\}$, with $f_1$ fully contained in $H_3$ and $e_1$ fully contained in $H_2$, while $f_2$ is fully contained in $H_3^c$ and $e_2,e_3$ are fully contained in $H_2^c$.
        
        \item $H$ is a $3$-hemisphere such that $E(H,H^c)=\{f_1,e_1,e_2\}$, with $f_1$ fully contained in $H_3$ and $e_1,e_2$ fully contained in $H_2$, while $f_2$ is fully contained in $H_3^c$ and $e_3$ is fully contained in $H_2^c$.
    \end{enumerate}
\end{Lem}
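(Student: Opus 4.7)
The plan is to decompose $V(\Gamma)$ into four blocks and count edges by their block endpoints. Set
\[
A = H_2\cap H_3 = H,\quad B = H_2\setminus H_3,\quad C = H_3\setminus H_2,\quad D = V(\Gamma)\setminus(H_2\cup H_3);
\]
each block is non-empty by the hypotheses ($H\neq\emptyset$, $H\subsetneq H_2$, $H\subsetneq H_3$ and $H_2\cup H_3\neq V(\Gamma)$). Writing $\varepsilon_{XY}$ for the number of edges with one endpoint in $X$ and one in $Y$, the identities $E(H_2,H_2^c)=E(A\cup B,\,C\cup D)$ and $E(H_3,H_3^c)=E(A\cup C,\,B\cup D)$ translate the $2$- and $3$-hemisphere conditions into
\[
\varepsilon_{AC}+\varepsilon_{AD}+\varepsilon_{BC}+\varepsilon_{BD}=2,\qquad \varepsilon_{AB}+\varepsilon_{AD}+\varepsilon_{BC}+\varepsilon_{CD}=3.
\]

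Next I would extract inequalities from connectedness of the four induced subgraphs $\Gamma(H_2), \Gamma(H_3), \Gamma(H_2^c), \Gamma(H_3^c)$: each has its vertex set split into two non-empty blocks whose only crossing edges within the subgraph are of a single specific type, so connectedness forces $\varepsilon_{AB},\varepsilon_{AC},\varepsilon_{BD},\varepsilon_{CD}\geq 1$. Substituting back into the two size equations forces $\varepsilon_{AC}=\varepsilon_{BD}=1$, $\varepsilon_{AD}=\varepsilon_{BC}=0$, and $\varepsilon_{AB}+\varepsilon_{CD}=3$ with both summands $\geq 1$, leaving exactly two possibilities: $(\varepsilon_{AB},\varepsilon_{CD})=(1,2)$, which will yield conclusion (1), and $(\varepsilon_{AB},\varepsilon_{CD})=(2,1)$, which will yield conclusion (2). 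Since $\varepsilon_{AD}=0$ one has $E(H,H^c)=E(A,B)\cup E(A,C)$, of total size $2$ in the first case and $3$ in the second. The labels $f_i,e_j$ are then read off directly from the block structure: the unique $(A,C)$-edge lies in $E(H_2,H_2^c)$ with both endpoints in $H_3$ (hence \emph{fully contained} in $H_3$), giving $f_1$; the unique $(B,D)$-edge lies in $E(H_2,H_2^c)$ with both endpoints in $H_3^c$, giving $f_2$; the $(A,B)$-edges lie in $E(H_3,H_3^c)$ with endpoints in $H_2$, and the $(C,D)$-edges lie in $E(H_3,H_3^c)$ with endpoints in $H_2^c$, supplying the $e_j$ labels in the order prescribed by each case.

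What remains is to check that $H$ is genuinely a hemisphere, i.e.\ that $\Gamma(H)=\Gamma(A)$ and $\Gamma(H^c)=\Gamma(B\cup C\cup D)$ are connected. The key observation is that $\varepsilon_{AC}=\varepsilon_{BD}=1$ holds in \emph{both} cases: applying the standard bridge argument inside $\Gamma(H_3)$ (a connected graph on $A\sqcup C$ with exactly one crossing edge) forces both $\Gamma(A)$ and $\Gamma(C)$ to be connected, and similarly $\Gamma(H_3^c)$ gives connectedness of $\Gamma(B)$ and $\Gamma(D)$. Then $\Gamma(B\cup C\cup D)$ is connected because the single $(B,D)$-edge and any $(C,D)$-edge route $B$ and $C$ through $D$ (using that $\varepsilon_{BC}=0$). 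I expect the principal subtlety to be precisely the choice of auxiliary hemisphere for proving $\Gamma(A)$ connected: in Case 2 one has $\varepsilon_{AB}=2$, so the naive bridge argument inside $\Gamma(H_2)$ fails, and the pivot to $\Gamma(H_3)$, where $\varepsilon_{AC}=1$ always, is what resolves both cases uniformly.
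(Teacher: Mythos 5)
Your proof is correct and follows essentially the same route as the paper's: the same four-block partition $H$, $H_2\setminus H_3$, $H_3\setminus H_2$, $H_2^c\cap H_3^c$, the same use of connectedness of $H_2$, $H_2^c$, $H_3$, $H_3^c$ to force at least one crossing edge between the appropriate pairs of blocks, and the same single-bridge argument inside $\Gamma(H_3)$ and $\Gamma(H_3^c)$ to establish connectedness of $H$ and $H^c$. The only cosmetic difference is that you organize the count as two linear equations in the quantities $\varepsilon_{XY}$, whereas the paper pins down the individual edges $f_1,f_2,e_1,e_3$ directly.
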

\begin{proof}
By the hypothesis, the sets $H$, $H_2\setminus H$, $H_3\setminus H$ and $H_2^c\cap H_3^c$ are nonempty and form a partition of $V(\Gamma)$. Since $H_3$ is connected, we have that $E(H,H_3\setminus H)$ is nonempty. However $E(H,H_3\setminus H)\subseteq E(H_2,H_2^c)=\{f_1,f_2\}$. We assume  that $f_1\in E(H,H_3\setminus H)$. Arguing in a similar manner, using that $H_3^c$ is connected, we have that $f_2\in E(H_2\setminus H, H_2^c\cap H_3^c)$. Even more, we can conclude that $e_1\in E(H,H_2\setminus H)$ and $e_3\in E(H_3\setminus H,H_2^c\cap H_3^c)$ (see Figure \ref{fig:intersection_hemispheres}).
  \begin{figure}[ht]
    \centering
    \begin{tikzpicture}[scale=1.7]
    \draw  (0,0) circle (0.5cm);
    \node at (0,0) {$H$};
    \draw (2,0) circle (0.5cm);
    \node at (2,0) {$H_2\setminus H$};
    \draw (0,-2) circle (0.5cm);
    \node at (0,-2) {$H_3\setminus H$};
    \draw (2,-2) circle (0.5cm);
    \node at (2,-2) {$H_2^c\cap H_3^c$};
    \draw[fill] (0.4,0) circle (0.02cm);
    \draw[fill] (1.6,0) circle (0.02cm);
    \draw  (0.4,0) -- (1.6,0);
    \node[above] at (1,0) {$e_1$};
    \draw[fill] (0.4,-2) circle (0.02cm);
    \draw[fill] (1.6,-2) circle (0.02cm);
    \draw  (0.4,-2) -- (1.6,-2);
    \node[above] at (1,-2) {$e_3$};
    \draw[fill] (0,-0.4) circle (0.02cm);
    \draw[fill] (0,-1.6) circle (0.02cm);
    \draw  (0,-0.4) -- (0,-1.6);
    \node[left] at (0,-1) {$f_1$};
    \draw[fill] (2,-0.4) circle (0.02cm);
    \draw[fill] (2,-1.6) circle (0.02cm);
    \draw  (2,-0.4) -- (2,-1.6);
    \node[left] at (2,-1) {$f_2$};
    \end{tikzpicture}
    \caption{The edges $e_1,e_3,f_1,f_2$.}
    \label{fig:intersection_hemispheres}
\end{figure}
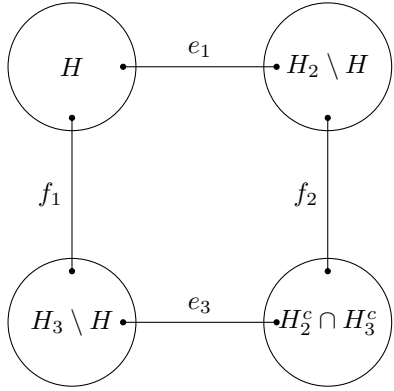

Since $H_2$ is a $2$-hemisphere, we have that 
  \begin{align*}
  \quad\quad\quad\quad\quad\quad\quad\quad E(H,H_3\setminus H)&=\{f_1\}, & E(H_2\setminus H,H_3\setminus H)&=\emptyset,\quad\quad\quad\quad\quad\quad\quad\quad \\
  \quad\quad\quad\quad\quad\quad\quad\quad E(H,H_2^c\cap H_3^c)&=\emptyset, & \quad E(H_2\setminus H,H_2^c\cap H_3^c)&=\{f_2\}.\quad\quad\quad\quad\quad\quad\quad\quad
  \end{align*}
  Hence $E(H_3,H_3^c)=E(H,H_2\setminus H)\cup E(H_3\setminus H, H_2^c\cap H_3^c)$. Therefore, the edge $e_2$ only has two possibilities: it belongs to either $E(H_3\setminus H, H_2^c\cap H_3^c)$ or $E(H,H_2\setminus H)$. In the former case $H$ satisfies the conditions in item (1), while in the latter case it satisfies the conditions in item (2).
  
  Note that the sets $H$, $H_3\setminus H$, $H_2\setminus H$ and $H_2^c\cap H_3^c$ are connected because $H_3=H\cup (H_3\setminus H)$ is connected and $H$ and $H_3\setminus H$ are connected by a single edge, hence each $H$ and $H_3\setminus H$ must be connected. The same reasoning holds for $H_2\setminus H$ and $H_2^c\cap H_3^c$, using the fact that $H_3^c$ is connected. Hence $H^c=(H_3\setminus H) \cup (H_2\setminus H) \cup (H_2^c\cap H_3^c)$ is connected, which means that $H$ is a hemisphere.
\end{proof}

We let $H_{2,1}$ be the minimal element of $\mc H_{\Gamma,2}(v_0,\{v_1,v_2\})$ (which exists and is unique by Proposition \ref{prop:closed}).  Define 
\[
\mc H^{\free}_{\Gamma,2}(v_0,\{v_1,v_2\})=\{H_{2,1},\dots,H_{2,m_2}\},
\]
where $H_{2,i}$ is the minimal element of the set of hemispheres of $\mc H_{\Gamma,2}(v_0,\{v_1,v_2\})$ that are $H_{2,j}$-free for every $j<i\le m_2$ and containing $H_{2,i-1}$. The hemisphere $H_{2,i}$ exists and is unique since the set 
\[
\{H\in \mc H_{\Gamma,2}(v_0,\{v_1,v_2\})| H_{2,i-1}\subset H, H\text{ is } H_{2,j}\text{-free for $j=1,\ldots, i-1$}\}
\]
is intersection-closed by Proposition \ref{prop:closed} and Remark \ref{rem:H1H2}.

Notice that we have a sequence of nested $2$-hemispheres
\[
H_{2,1}\subset H_{2,2}\subset \ldots \subset H_{2,m_2}.
\]

We let $\mc H'_{\Gamma,3}(v_0,\{v_1,v_2\})$ be the subset of  $\mc H_{\Gamma,3}(v_0,\{v_1,v_2\})$ of the hemispheres that are $H$-free for every $H\in \mc H_2^{\free}(v_0,\{v_1,v_2\})$.

\begin{Prop}
The subset $\mc H'_{\Gamma,3}(v_0,\{v_1,v_2\})$ is intersection-closed.
\end{Prop}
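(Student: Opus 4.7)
To prove intersection-closedness, I take $H_3, H_3' \in \mc H'_{\Gamma,3}(v_0,\{v_1,v_2\})$ and argue $H_3 \cap H_3' \in \mc H'_{\Gamma,3}(v_0,\{v_1,v_2\})$. If $H_3 \subseteq H_3'$ or $H_3' \subseteq H_3$ there is nothing to check, so I assume them incomparable. Then the four sets $H_3\cap H_3'$, $H_3\setminus H_3'$, $H_3'\setminus H_3$, $H_3^c\cap (H_3')^c$ form a partition of $V(\Gamma)$ into nonempty pieces ($\{v_1,v_2\}$ lies in the first, $v_0$ in the last, and incomparability handles the middle two).

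Following the template of Lemma~\ref{lem:intersect23}, I let $a,b,c,d,e,f$ be the cardinalities of $E(H_3\cap H_3',H_3\setminus H_3')$, $E(H_3\cap H_3',H_3'\setminus H_3)$, $E(H_3\cap H_3',H_3^c\cap(H_3')^c)$, $E(H_3\setminus H_3',H_3'\setminus H_3)$, $E(H_3\setminus H_3',H_3^c\cap(H_3')^c)$, $E(H_3'\setminus H_3,H_3^c\cap(H_3')^c)$, respectively. Connectedness of $\Gamma(H_3), \Gamma(H_3'), \Gamma(H_3^c), \Gamma((H_3')^c)$ forces $a,b,e,f \geq 1$, and the equalities $b+c+d+e = a+c+d+f = 3$ then restrict $(a,b,c,d,e,f)$ to exactly six numerical configurations, with $|E(H_3\cap H_3',(H_3\cap H_3')^c)| = a+b+c \in \{2,3,4\}$.

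The main obstacle, and the one place where the freeness hypothesis is crucial, is ruling out $a+b+c \neq 3$. Suppose first $a+b+c=2$ (so $a=b=1$, $c=0$, $d\in\{0,1\}$). Single-edge arguments applied to $\Gamma(H_3),\Gamma(H_3'),\Gamma(H_3^c),\Gamma((H_3')^c)$ show that $H_3\cap H_3'$ is in fact a $2$-hemisphere in $\mc H_{\Gamma,2}(v_0,\{v_1,v_2\})$ (in the subcases where a piece could a priori split into two components, the position of the edges is forced so that $\Gamma((H_3\cap H_3')^c)$ remains connected). By Remark~\ref{rem:H1H2} it is $H_{2,j}$-free for every $j$, and induction along the chain $H_{2,1}\subset\cdots\subset H_{2,m_2}$, using the minimality in the definition of each $H_{2,j}$ together with the terminal condition for $m_2$, pins down $H_3\cap H_3'=H_{2,m_2}$. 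But the $b$-edge then lies in $E(H_3,H_3^c)\cap E(H_{2,m_2},H_{2,m_2}^c)$, contradicting the $H_{2,m_2}$-freeness of $H_3$. The case $a+b+c=4$ (forcing $a=b=2$, $c=d=0$, $e=f=1$) is handled symmetrically by running the very same argument on $H_3\cup H_3'$, whose boundary has $c+e+f=2$ edges; the chain argument identifies $H_3\cup H_3'$ with $H_{2,m_2}$, and the $e$-edge then furnishes the contradicting common edge of $E(H_3,H_3^c)$ and $E(H_{2,m_2},H_{2,m_2}^c)$.

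In the three surviving configurations, $(a,b,c,d,e,f)\in\{(1,2,0,0,1,2),(2,1,0,0,2,1),(1,1,1,0,1,1)\}$, one has $|E(H_3\cap H_3',(H_3\cap H_3')^c)|=3$, and the same type of single-edge analysis as in the final paragraph of the proof of Lemma~\ref{lem:intersect23} yields connectedness of $\Gamma(H_3\cap H_3')$ and of $\Gamma((H_3\cap H_3')^c)$. Thus $H_3\cap H_3'\in\mc H_{\Gamma,3}(v_0,\{v_1,v_2\})$, and since $H_3$ and $H_3'$ are each $H_{2,j}$-free for every $j$, Remark~\ref{rem:H1H2} gives the same for $H_3\cap H_3'$, completing the proof.
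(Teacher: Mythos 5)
Your argument is correct, but it is worth noting that the paper does not actually prove this proposition internally: its proof is the single line ``See \cite[Proposition 3.5]{P}'', deferring entirely to an external reference. What you have written is therefore a self-contained substitute, built by transplanting the edge-counting technique of Lemma \ref{lem:intersect23} (partition $V(\Gamma)$ into the four pieces, bound the six boundary counts $a,\dots,f$ by connectedness and the two equations summing to $3$) and then using the freeness hypothesis to kill the two degenerate boundary sizes. The enumeration of the six configurations is right, the union argument $\Gamma(H_3^c)\cup\Gamma((H_3')^c)$ gives connectedness of the complement in all cases, and in each surviving configuration $\min(a,b)=1$ so the bridge argument gives connectedness of $\Gamma(H_3\cap H_3')$; Remark \ref{rem:H1H2} then supplies the freeness of the intersection. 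The one place where your phrasing is looser than the logic requires is the chain step: the induction along $H_{2,1}\subset\cdots\subset H_{2,m_2}$ really shows that \emph{no} element of $\mc H_{\Gamma,2}(v_0,\{v_1,v_2\})$ can be $H_{2,j}$-free for every $j$ (it would have to properly contain $H_{2,m_2}$ and hence extend the chain, or coincide with $H_{2,m_2}$ and fail to be free of itself), so saying the induction ``pins down $H_3\cap H_3'=H_{2,m_2}$'' before extracting the contradiction from the $b$-edge is redundant rather than wrong; the same remark applies to the symmetric $a+b+c=4$ case via $H_3\cup H_3'$. Modulo that cosmetic point, this is a complete proof of a statement the paper only cites, and it would be reasonable to include it as an appendix-style verification.
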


\begin{proof}
See  \cite[Proposition 3.5]{P}.
\end{proof}

We let $H_{3,1}$ be the minimal element of $\mc H'_{\Gamma,3}(v_0,\{v_1,v_2\})$  and define 
\[
\mc H^{\free}_{\Gamma,3}(v_0,\{v_1,v_2\})=\{H_{3,1},\dots,H_{3,m_3}\},
\]
where $H_{3,i}$ is the minimal element of the set of hemispheres $\mc H'_{\Gamma,3}(v_0,\{v_1,v_2\})$ that are $H_{3,j}$-free for every $j<i\le m_3$ and containing $H_{3,i-1}$. As before, we have that $\mc H_{\Gamma,3}^{\free}$ is well-defined. Notice that we have a sequence of nested $3$-hemispheres
\[
H_{3,1}\subset H_{3,2}\subset\dots\subset H_{3,m_3}.
\]


\begin{Rem}\label{rem:orientation}
  Let $k=2,3$. Notice that we have a natural orientation on every edge $e\in E(H_{k,i}, H_{k,i}^c)$ such that $s(e)\in H_{k,i}$ and $t(e)\in H_{k,i}^c$. 
  Moreover, if $e\in E(H_{k,i}, H_{k,i}^c)$, then $t(e)\in H_{k,i+1}$.
\end{Rem}

Finally we set:
\begin{align*}
\mc F_\Gamma(v_0,v_1,v_2)=&\ \mc H_{\Gamma,1}(v_0,v_1)\ \sqcup \ \mc H_{\Gamma,1}(v_0,v_2)\\ &\sqcup\ \mc H^{\free}_{\Gamma,2}(v_0,\{v_1,v_2\})\ \sqcup \ \mc H^{\free}_{\Gamma,3}(v_0,\{v_1,v_2\}).
\end{align*}
Notice that the same $1$-hemisphere could belong in both $\mc H_{\Gamma,1}(v_0,v_1)$ and $\mc H_{\Gamma,1}(v_0,v_2)$.

 For every subset $V\subset V(\Gamma)$, we let $\div(V)$ be the principal divisor on $\G$ given by:
\[
\div(V)=\sum_{e\in E(V,V^c)}(s(e)-t(e)), 
\]
where the orientation is chosen such that $s(e)\in V$ for every $e\in E(V,V^c)$. Let $v_0,v_1,v_2$ be vertices of $\Gamma$ and $\mu$ the trivial degree-$0$ polarization on $\Gamma$.
The following result tells us how to find the $(v_0, \mu)$-quasistable divisor equivalent to the divisor $2v_0-v_1-v_2$. 

\begin{Thm}\label{thm:convert}
Let $\Gamma$ be a graph and $v_0,v_1,v_2$ vertices of $\Gamma$. Then
\[
2v_0-v_1-v_2-\sum_{V\in \mc F_\Gamma(v_0,v_1,v_2)}\div(V)
\]
is the $(v_0,\mu)$-quasistable divisor equivalent to $2v_0-v_1-v_2$.
\end{Thm}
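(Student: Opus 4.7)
My plan is to verify the two defining properties of the $(v_0,\mu)$-quasistable representative: linear equivalence to $2v_0-v_1-v_2$, and $(v_0,\mu)$-quasistability. Equivalence is immediate since
\[
D \;:=\; 2v_0-v_1-v_2-\sum_{V\in \mc F_\Gamma(v_0,v_1,v_2)}\div(V)
\]
differs from $2v_0-v_1-v_2$ by a $\mathbb Z$-combination of principal divisors. The substance lies in the inequality
\[
\beta_D(Y) \;:=\; \sum_{w\in Y} D(w) + \tfrac12\,|E(Y,Y^c)| \;\geq\; 0
\]
for every $Y \subset V(\Gamma)$, with strict inequality when $v_0 \in Y \ne V(\Gamma)$.

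The first step would be to record the flux formula: for each $V \in \mc F_\Gamma(v_0,v_1,v_2)$, with the orientation $s(e)\in V$ on $e\in E(V,V^c)$,
\[
-\div(V)(Y) \;=\; |E(V^c\cap Y,V\cap Y^c)| \;-\; |E(V\cap Y,V^c\cap Y^c)|.
\]
So the contribution of $V$ to $\beta_D(Y)$ is the signed number of edges of its cut that cross the cut of $Y$. Combined with the obvious contribution $2[v_0\in Y]-[v_1\in Y]-[v_2\in Y]$ of $2v_0-v_1-v_2$, this yields a closed form for $\beta_D(Y)$ as a sum of integer local terms.

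The inequality then follows from a careful multiplicity count on $E(Y,Y^c)$. By the nesting $H_{k,1}\subset\cdots\subset H_{k,m_k}$ and the defining "free" conditions, any $e \in E(Y,Y^c)$ lies in the cut $E(V,V^c)$ of at most one hemisphere in each family $\mc H^{\free}_{\Gamma,k}(v_0,\{v_1,v_2\})$ ($k=2,3$) and at most one $1$-hemisphere in each of $\mc H_{\Gamma,1}(v_0,v_1)$ and $\mc H_{\Gamma,1}(v_0,v_2)$; hence the $\tfrac12$-weight of each cut-edge of $Y$ comfortably absorbs the $\pm$ contributions it receives. The deficits $-1$ at $v_1,v_2$ are compensated by the $1$-hemispheres, whose cut edges push the chips toward $v_0$, and the strict inequality for $v_0\in Y$ is forced by the $+2$ contribution at $v_0$ together with the fact that $v_0\notin V$ for every $V \in \mc F_\Gamma$, so that each $-\div(V)(\{v_0\})\ge 0$.

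The main obstacle is the coupled bookkeeping when $Y$ simultaneously straddles several nested $2$- and $3$-hemispheres: here Lemma \ref{lem:intersect23}, which restricts how a $2$-hemisphere and a $3$-hemisphere can share cut edges, together with Proposition \ref{prop:closed} and the consistent orientation supplied by Remark \ref{rem:orientation}, are the essential tools, since they ensure that the signs of the crossing contributions line up in a way that makes the absorption argument work. In practice the cleanest implementation is to transcribe the argument of the corresponding statement in \cite{P} via the standard dictionary between multidegrees of torsion-free rank-$1$ sheaves on a nodal curve and divisors on its dual graph; all the hemisphere machinery recalled above is precisely the combinatorial counterpart set up for that purpose, so once the reduction to $\beta_D(Y)\ge 0$ is in place the remaining work is mechanical.
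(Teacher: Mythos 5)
The paper's entire proof of this statement is the citation ``See \cite[Theorem 5.3]{P}'', so the one fully sound part of your plan --- the closing remark that the cleanest implementation is to transcribe the argument from \cite{P} --- coincides exactly with what the authors do, and your opening observation that linear equivalence is immediate (the correction term is a sum of principal divisors) is correct. The issue is the independent verification you sketch in between, which has genuine gaps rather than merely omitted routine details.

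First, the absorption step is false as a local statement. A single edge $e\in E(Y,Y^c)$ can lie simultaneously in the cuts of hemispheres from two different families of $\mc F_\Gamma(v_0,v_1,v_2)$: a bridge can be the cut edge of a hemisphere in $\mc H_{\Gamma,1}(v_0,v_1)$ and of one in $\mc H_{\Gamma,1}(v_0,v_2)$, and a non-bridge edge can lie in the cut of one $H_{2,i}$ and of one $H_{3,j}$ (this is exactly the situation analyzed in Lemma \ref{lem:intersect23}). Two crossing contributions of $-1$ cannot be absorbed by the $+\tfrac12$ that the edge contributes to $\delta_Y/2$, so the inequality $\beta_D(Y)\ge 0$ is not provable edge-by-edge; one must compute the actual multidegree of $D$ along the boundaries of the nested hemispheres and argue globally, which is precisely the nontrivial content of \cite{P}. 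Second, the compensation mechanism is misassigned: if $\Gamma$ has no bridges then $\mc H_{\Gamma,1}(v_0,v_i)=\emptyset$, and the deficits at $v_1,v_2$ must be repaired by the $2$- and $3$-hemispheres, not by the $1$-hemispheres as you claim. Third, the strictness argument does not work: for $Y=V(\Gamma)\setminus\{w\}$ with $w\ne v_0$, the condition $\beta_D(Y)>0$ is equivalent to an upper bound on $D(w)$ and is unaffected by the sign of $-\div(V)(v_0)$, so the $+2$ at $v_0$ together with $-\div(V)(v_0)\ge 0$ proves nothing about these sets. As it stands, the only complete route to the statement in your write-up is the citation, which is also all the paper offers.
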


\begin{proof}
See \cite[Theorem 5.3]{P}.
\end{proof}

Before going on with the proof of Theorem \ref{thm:Abel2}, we need to introduce a divisor on a tropical curve $X$ attached to a hemisphere of its underlying graph. 

\begin{Def}
Let $X$ be a tropical curve and 
 $(\Gamma,\ell)$ be a model of $X$. Let $v_0$ be a vertex of $\Gamma$. 
For a hemisphere $H$ of $\Gamma$, consider the orientation on an edge $e\in E(H,H^c)$ from the vertex of $e$ contained in $H$ to the vertex of $e$ contained in $H^c$ (see Remark \ref{rem:orientation}). 
We define the divisor 
\[
\mc P_H=\sum_{e\in E(H,H^c)} p_{e,0}-\sum_{e\in E(H,H^c)} p_{e,\ell(e)}. 
\]
\end{Def}

\begin{Rem}\label{rem:P_principal}
  Notice that if $\ell(e)=\ell(e')$ for every $e,e'\in E(H,H^c)$, then $\mc P_H$ is a principal divisor on $X$. 
\end{Rem}

\begin{proof}[Proof of Theorem \ref{thm:Abel2}]

We can assume that all the components of $C$ are smooth. Indeed, the general case follows from the case in which the components of $C$ are smooth arguing as in the last part of the proof of \cite[Theorem 5.8]{AAP}, and using \cite[Theorem 1.3]{P}.

 Since quasistability is an open property by \cite[Proposition 34]{EE01}, it is enough to check that the global blowup of $\C^2$ described in the statement is a blowup resolving the Abel map $\alpha^2_{\mc O_C}\col \mc C^2\ra \overline{\mc J}^\sigma_\mu$ locally around any point $\mc N=(N_1,N_2)$ of $\mc C^2$ for $N_i$ a node of $C$. 
 
  If $N_1=Z\cap Z^c$ for a $1$-tail $Z$ of $C$, the result follows from Proposition \ref{prop:1-tail}.  If $N_1=N_2$, the result follows from Proposition \ref{prop:diagonal}.  So we will assume, through the rest of the proof, that $N_1\neq N_2$  and neither $N_1$ nor $N_2$ disconnects $C$.


We will use Theorem \ref{thm:tropgeoAbel}. Let $\G_0$ be the dual graph of $C$, and let
   $X=X_{\G_0}$, namely, the tropical curve whose underlying graph is $\Gamma_0$  with all unitary lengths.  
  Let $v_0$ be the vertex of $\Gamma_0$  corresponding to $P_0=\sigma(0)$, and $p_0\in X$ be the point corresponding to $v_0$.
We have $\D^\dagger_{\mathcal O_\C}=2p_0$.
The tropical Abel map $\alpha^\trop_{2,\D^\dagger_{\mathcal O_C}}\col X^2\ra J^\trop_{p_0,\mu}$ takes a pair $(p_{e_1,t_1},p_{e_2,t_2})$, for edges $e_1,e_2\in E(\Gamma_0)$ and real numbers $t_1,t_2\in [0,1]$, to:
\[
\alpha^\trop_{2,\D^\dagger_{\mathcal O_C}}(p_{e_1,t_1},p_{e_2,t_2})=[2p_0-p_{e_1,t_1}-p_{e_2,t_2}].
\]
 
Let $(\Gamma,\ell)$ be the model of $X$ such that $\Gamma$ is the refinement of $\Gamma_0$ obtained by inserting vertices $v_{e_1},v_{e_2}$ in the interior of $e_1,e_2$, respectively, with $\ell([s(e_i),v_{e_i}])=t_i$. 
We let $K_{2,1}$ be the minimal element of $\mathcal H^{\free}_{\Gamma,2}(v_0,\{v_{e_1},v_{e_2}\})$ and $K_{3,1}$ be the minimal element of $\mathcal H^{\free}_{\Gamma,3}(v_0,\{v_{e_1},v_{e_2}\})$.

 We have three cases to consider.
 
\begin{enumerate}

    \item We have that
    $E(v_{e_i},\{v_{e_i}\}^c)\cap E(K_{2,1},K_{2,1}^c)\neq \emptyset$, for every $i=1,2$.

    \item We have that 
    \[
    E(v_{e_1},\{v_{e_1}\}^c)\cap E(K_{2,1},K_{2,1}^c)\neq \emptyset.
    \]
    \[
    E(v_{e_2},\{v_{e_2}\}^c)\cap E(K_{2,1},K_{2,1}^c)= \emptyset
    \]
    We distinguish 3 subcases:
      \begin{enumerate}
           \item[(2.a)] There exists a $3$-hemisphere $K_3$ containing $v_{e_1}$ and $v_{e_2}$ and not containing $v_0$ such that $E(v_{e_i},\{v_{e_i}\}^c)\cap E(K_{3},K_{3}^c)\neq \emptyset$, for every $i=1,2$.

          \item[(2.b)] Every $3$-hemisphere $K_3$ containing $v_{e_1}$ and $v_{e_2}$ and not containing $v_0$ satisfies the condition $E(v_{e_2},\{v_{e_2}\}^c)\cap E(K_{3},K_{3}^c)=\emptyset$ and  there exists a $3$-hemisphere $K_3'$ such that $E(v_{e_1},\{v_{e_1}\}^c)\cap E(K_{3}',K_{3}'^c)\neq \emptyset$.
          
         \item[(2.c)] Every $3$-hemisphere $K_3$ containing $v_{e_1}$ and $v_{e_2}$ and not containing $v_0$ satisfies the condition $E(v_{e_i},\{v_{e_i}\}^c)\cap E(K_{3},K_{3}^c)=\emptyset$, for every $i=1,2$.
         
      \end{enumerate}
    Notice that there are no other subcases to consider, because if the two conditions:
 \[
    E(v_{e_2},\{v_{e_2}\}^c)\cap E(K_{3},K_{3}^c)\neq \emptyset.
    \]
    \[
    E(v_{e_1},\{v_{e_1}\}^c)\cap E(K_{3},K_{3}^c)= \emptyset
    \]
hold for some $3$-hemisphere $K_3$,
then, by Lemma \ref{lem:intersect23}, we have that $K_{2,1}\cap K_3$ is a $3$-hemisphere (because $K_{2,1}$ is minimal) and it would satisfy the condition in case (2.a).
    
    \item We have 
    \[
    E(v_{e_i},\{v_{e_i}\}^c)\cap E(K_{2,1},K_{2,1}^c)=\emptyset,\; \text{for every } i=1,2.
    \]
    We distinguish 3 subcases:
      \begin{enumerate}
          \item[(3.a)] We have that $E(v_{e_i},\{v_{e_i}\}^c)\cap E(K_{3,1},K_{3,1}^c)\neq \emptyset$, for every $i=1,2$. 
          
          \item[(3.b)] We have that 
            \[
          E(v_{e_1},\{v_{e_1}\}^c)\cap E(K_{3,1},K_{3,1}^c)\neq \emptyset.
          \]
          \[
          E(v_{e_2},\{v_{e_2}\}^c)\cap E(K_{3,1},K_{3,1}^c)= \emptyset\]

          \item[(3.c)] We have that $E(v_{e_i},\{v_{e_i}\}^c)\cap E(K_{3,1},K_{3,1}^c)= \emptyset$, for every $i=1,2$. 
      \end{enumerate}
\end{enumerate}

We discuss the above cases. Case (1)  follows from Proposition \ref{prop:2-tail}.

   Case (2.a).  We assume that the orientation of $e_1$ and $e_2$ satisfies the condition $s(e_1),s(e_2)\in K_3$. Recall that $t_i=\ell([s(e_i),v_{e_i}])$. 
    
    We consider the refinement $\Gamma'$ of $\Gamma_0$ by adding two vertices over each edge. Of course, $\Gamma'$ is a refinement of $\Gamma$. We denote by $\psi$ the natural function
    \[
    \psi\col E(\Gamma')\ra E(\Gamma_0)
    \]
    taking an edge $e$ of $\Gamma'$ to the edge $f$ of $\Gamma_0$ if $e$ is obtained by subdividing $f$.

    For $i=1,2$, we already have the vertex $v_{e_i}$ over the edge $e_i$, so we will only add another vertex $v'_{e_i}$. As illustrated in Figure \ref{fig:orderings}, if $t_1< t_2$, the vertices over $e_1$ will be ordered as follows: $s(e_1), v_{e_1}, v_{e_1}', t(e_1)$, while the vertices over $e_2$ are ordered as follows: $s(e_2), v'_{e_2}, v_{e_2}, t(e_2)$. On the other hand, if $t_1> t_2$, then the orderings become $s(e_1), v_{e_1}', v_{e_1}, t(e_1)$ and $s(e_2), v_{e_2}, v'_{e_2}, t(e_2)$. 
     \begin{figure}[ht]
       \begin{tikzpicture}[scale=1.3]
          \begin{scope}
          \node[above] at (0,0) {$s(e_1)$};
          \draw[fill]  (0,0) circle (0.05cm);
          
          \node[above] at (1,0) {$v_{e_1}$};
          \draw[fill]  (1,0) circle (0.05cm);
          \node[above] at (2,0) {$v_{e_1}'$};
          \draw[fill]  (2,0) circle (0.05cm);
          \node[above] at (3,0) {$t(e_1)$};
          \draw[fill]  (3,0) circle (0.05cm);
           \draw (0,0) -- (3,0);
          \end{scope}
          \begin{scope}[shift={(0,-1)}]
          \node[above] at (0,0) {$s(e_2)$};
          \draw[fill]  (0,0) circle (0.05cm);
          
          \node[above] at (1,0) {$v_{e_2}'$};
          \draw[fill]  (1,0) circle (0.05cm);
          \node[above] at (2,0) {$v_{e_2}$};
          \draw[fill]  (2,0) circle (0.05cm);
          \node[above] at (3,0) {$t(e_2)$};
          \draw[fill]  (3,0) circle (0.05cm);
           \draw (0,0) -- (3,0);
          \end{scope}
          
          \begin{scope}[shift={(6,0)}]
          \begin{scope}
          \node[above] at (0,0) {$s(e_1)$};
          \draw[fill]  (0,0) circle (0.05cm);
          
          \node[above] at (1,0) {$v_{e_1}'$};
          \draw[fill]  (1,0) circle (0.05cm);
          \node[above] at (2,0) {$v_{e_1}$};
          \draw[fill]  (2,0) circle (0.05cm);
          \node[above] at (3,0) {$t(e_1)$};
          \draw[fill]  (3,0) circle (0.05cm);
           \draw (0,0) -- (3,0);
          \end{scope}
          \begin{scope}[shift={(0,-1)}]
          \node[above] at (0,0) {$s(e_2)$};
          \draw[fill]  (0,0) circle (0.05cm);
          
          \node[above] at (1,0) {$v_{e_2}$};
          \draw[fill]  (1,0) circle (0.05cm);
          \node[above] at (2,0) {$v_{e_2}'$};
          \draw[fill]  (2,0) circle (0.05cm);
          \node[above] at (3,0) {$t(e_2)$};
          \draw[fill]  (3,0) circle (0.05cm);
           \draw (0,0) -- (3,0);
          \end{scope}
          \end{scope}
          
       \end{tikzpicture}
       \caption{}
       \label{fig:orderings}
   \end{figure}
      
      Throughout the proof of case (2a), we will assume that $t_1<t_2$, leaving to the reader the case $t_1>t_2$. Our goal will be to find a length function $\ell'$ on $\G'$ so that  $(\G',\ell')$ is a model of $X$, and  the divisors $\P_H$ for every $H\in\mc F_{\Gamma'}(v_0,v_{e_1},v_{e_2})$ are principal on $X$.  This allows us to conclude the proof. Indeed, using Remark \ref{prop:quasiquasi} and Theorem \ref{thm:convert} we get that the divisor
\begin{equation}\label{eq:graph}
\mathcal{D}_{t_1,t_2}:=2p_0-p_{e_1,t_1}-p_{e_2,t_2}+\sum_{H\in \mc F_{\Gamma'}(v_0,v_{e_1},v_{e_2})}\mc P_H
\end{equation}
is $(p_0,\mu)$-quasistable. 
  Hence Theorem \ref{thm:tropgeoAbel} (1) tells us that the  blowup illustrated on the left hand side of Figure \ref{Fig:blowup} with $Z_1=Z_2=\wh Z$, where $\wh Z$ is the $3$-tail of $C$ induced by $K_3\cap V(\Gamma)$, gives rise to a resolution of the Abel map $\alpha^2_{\mc O_\C}$ locally at $(N_1,N_2)$. This is the blowup locally around $(N_1,N_2)$ prescribed by the global blowup in the statement of  Theorem \ref{thm:Abel2}.

     We proceed with the construction of the length function $\ell'$.  We write 
       \begin{equation}
            \label{eq:H23_free}
           \begin{aligned}
           \mathcal H_{\Gamma',2}^{\free}(v_0,\{v_{e_1},v_{e_2}\})&=\{H_{2,1},\ldots, H_{2,{m_2}}\}\\
           \mathcal H_{\Gamma',3}^{\free}(v_0,\{v_{e_1},v_{e_2}\})&=\{H_{3,1},\ldots, H_{3,m_3}\}.
           \end{aligned}
       \end{equation}

    
    We define a sequence  $f_1,f_2,\ldots, f_k$ of edges of $\Gamma_0$ as illustrated in Figure \ref{fig:fi_sequence_t1<t2_odd}, with $f_1=e_1$ and $f_2\in E(H_{2,1},H_{2,1}^c)$, and where the other edges are chosen as follows.
    Assume that $t_1<t_2$. The edges of the sequence satisfy
    \[
    f_{2i+1},f_{2i+2}\in \psi(E(H_{2,{3i+1}},H^c_{2,{3i+1}})), \;\;
    f_{2i+1},f_{2i+2}\in \psi(E(H_{2,{3i+2}},H_{2,{3i+2}}^c))
    \]
    \[f_{2i}, f_{2i+1}\in \psi(E(H_{2,{3i}},H^c_{2,{3i}})).
    \]
    Notice that if $k$ is odd with $k=2k'+1$, then  
    \[
    \psi(E(H_{2,3i+1},H_{2,3i+1}^c))=\psi(E(H_{2,3i+2},H_{2,3i+2}^c))=\psi(E(H_{2,3i+3},H_{2,3i+3}^c))
    \]
    for every $i\geq k'$. If $k$ is even with $k=2k'$, then for every $i\geq k'$ we have  
    \[
    \psi(E(H_{2,3i},H_{2,3i}^c))=\psi(E(H_{2,3i+1},H_{2,3i+1}^c))=\psi(E(H_{2,3i+2},H_{2,3i+2}^c)).
    \]


    Now we consider the $3$-hemispheres. If $\psi(E(H_{3,1},H_{3,1}^c))$ and  $\psi(E(H_{2,i},H_{2,i}^c))$ are disjoint for every $i$, let us define a length function $\ell'$ on the set of edges of $\Gamma'$ so that $(\Gamma',\ell')$ is a model for $X$. We will assume that $k=2k'$ is even (see Figure \ref{fig:fi_sequence_t1<t2_even}, also see Figure \ref{fig:fi_sequence_t1>t2_even} for the case $t_1>t_2$), leaving to the reader to work out the other case (see Figure \ref{fig:fi_sequence_t1<t2_odd}).    For every $e\in E(\G')$, we define:
       \[
\ell'(e)=
\begin{cases}
\begin{array}{ll}
  \frac{1-t_1}{2}   & \text{ if }e\in E(H_{2,{3i+1}},H_{2,3i+1}^c) \text{ or } e\in E(H_{2,3i+2},H_{2,3i+2}^c)\text{ for some } i<k'
  \\ 
  t_1   & \text{ if }\psi(e)=f_k\text{ and }e\notin E(H_{2,3k'-2},H_{2,3k'-2}^c)\cup E(H_{2,3k'-1},H_{2,3k'-1}^c)
  \\ 
  t_1 & \text{ if either } e\in E(H_{2,3i},H^c_{2,3i}) \text{ for some $i<k'$, or } t(e)=v_1\\
  1/3 & \text{ otherwise}.
\end{array}
\end{cases}
\]

For every edge $f\in E(\Gamma)$ we have that $\sum_{e\in \psi^{-1}(f)}\ell'(e)=1$. Indeed if $f=f_i$, then the sum of the lengths will be $t_1+\frac{1-t_1}{2}+\frac{1-t_1}{2}=1$, while if $f\notin\{f_1,\ldots, f_k\}$, then  the 3 edges in $\psi^{-1}(f)$ will have length $1/3$. So $(\Gamma',\ell')$ is a model of $X$. Notice that the divisors $\mathcal{P}_{H_{2,i}}$ and $\mathcal{P}_{H_{3,i}}$ are principal divisors by Remark \ref{rem:P_principal}. This conclude the proof in this case.
 \begin{figure}[ht]
       \begin{tikzpicture}[scale=1.2]
          \begin{scope}
          \draw[fill]  (0,0) circle (0.05cm);
          \node[above] at (1,0) {$v_{e_1}$};
          \draw[fill]  (1,0) circle (0.05cm);
          \draw[fill]  (2,0) circle (0.05cm);
          \draw[fill]  (3,0) circle (0.05cm);
          \node at (-0.3,0.0) {$f_1$};
          \draw (0,0) -- (3,0);
          \draw[blue] (1.1,0.5) to[bend left=20] (2.5,-1.7);
          \node at (2.1,-1.5) {$H_{2,1}$};
          \draw[blue] (2.1,0.5) to[bend left=20] (3.5,-1.7);
          \node at (3.1,-1.5) {$H_{2,2}$};
          \node[below] at (0.5,0) {$t_1$};
          \node[below] at (1.5,0) {$\frac{1-t_1}{2}$};
          \node[below] at (2.5,0) {$\frac{1-t_1}{2}$};
          \end{scope}
          
          \begin{scope}[shift={(2,-1)}]
          \draw[fill]  (0,0) circle (0.05cm);
          \draw[fill]  (1,0) circle (0.05cm);
          \draw[fill]  (2,0) circle (0.05cm);
          \draw[fill]  (3,0) circle (0.05cm);
          \draw (0,0) -- (3,0);
          \node at (-0.3,0.0) {$f_2$};
          \node[above] at (2.5,0) {$t_1$};
          \node[above] at (0.5,0) {$\frac{1-t_1}{2}$};
          \node[above] at (1.5,0) {$\frac{1-t_1}{2}$};
          \end{scope}
          
          \begin{scope}[shift={(4,0)}]
          \draw[fill]  (0,0) circle (0.05cm);
          \draw[fill]  (1,0) circle (0.05cm);
          \draw[fill]  (2,0) circle (0.05cm);
          \draw[fill]  (3,0) circle (0.05cm);
          \node at (-0.3,0.0) {$f_3$};
          \draw (0,0) -- (3,0);
          \draw[blue] (0.5,0.5) to[bend left=20] (0.5,-1.7);
          \node at (0.2,-1.5) {$H_{2,3}$};
          \draw[blue] (1.1,0.5) to[bend left=20] (2.5,-1.7);
          \node at (2.1,-1.5) {$H_{2,4}$};
          \draw[blue] (2.1,0.5) to[bend left=20] (3.5,-1.7);
          \node at (3.1,-1.5) {$H_{2,5}$};
          \node[below] at (0.5,0) {$t_1$};
          \node[below] at (1.5,0) {$\frac{1-t_1}{2}$};
          \node[below] at (2.5,0) {$\frac{1-t_1}{2}$};
          \end{scope}
          
          \begin{scope}[shift={(6,-1)}]
          \draw[fill]  (0,0) circle (0.05cm);
          \draw[fill]  (1,0) circle (0.05cm);
          \draw[fill]  (2,0) circle (0.05cm);
          \draw[fill]  (3,0) circle (0.05cm);
          \draw (0,0) -- (3,0);
          \node at (-0.3,0.0) {$f_4$};
          \node[above] at (2.5,0) {$t_1$};
          \node[above] at (0.5,0) {$\frac{1-t_1}{2}$};
          \node[above] at (1.5,0) {$\frac{1-t_1}{2}$};
          \end{scope}

       \end{tikzpicture}
       \caption{Attributing lengths to the edges of $\Gamma'$ for $t_1<t_2$ and $k$ even. In this case, $v_{e_2}$ is contained in $H_{2,i}$ for every $i=1,\ldots, m_2$.}
       \label{fig:fi_sequence_t1<t2_even}
   \end{figure}
   \begin{figure}[ht]
       \begin{tikzpicture}[scale=1.2]
          \begin{scope}
          \draw[fill]  (0,0) circle (0.05cm);
          \node[above] at (2,0) {$v_{e_1}$};
          \draw[fill]  (1,0) circle (0.05cm);
          \draw[fill]  (2,0) circle (0.05cm);
          \draw[fill]  (3,0) circle (0.05cm);
          \node at (-0.3,0.0) {$f_1$};
          \draw (0,0) -- (3,0);
          \draw[blue] (2.1,0.5) to[bend left=20] (2.1,-1.7);
          \node at (1.8,-1.5) {$H_{2,1}$};
          
          \node[below] at (0.5,0) {$\frac{t_1}{2}$};
          \node[below] at (1.5,0) {$\frac{t_1}{2}$};
          \node[below] at (2.7,0) {${1-t_1}$};
          \end{scope}
          
          \begin{scope}[shift={(2,-1)}]
          \draw[fill]  (0,0) circle (0.05cm);
          \draw[fill]  (1,0) circle (0.05cm);
          \draw[fill]  (2,0) circle (0.05cm);
          \draw[fill]  (3,0) circle (0.05cm);
          \draw (0,0) -- (3,0);
          \node at (-0.3,0.0) {$f_2$};
          \node[above] at (2.5,0) {$\frac{t_1}{2}$};
          \node[above] at (0.7,0) {${1-t_1}$};
          \node[above] at (1.5,0) {$\frac{t_1}{2}$};
          \end{scope}
          
          \begin{scope}[shift={(4,0)}]
          \draw[fill]  (0,0) circle (0.05cm);
          \draw[fill]  (1,0) circle (0.05cm);
          \draw[fill]  (2,0) circle (0.05cm);
          \draw[fill]  (3,0) circle (0.05cm);
          \node at (-0.3,0.0) {$f_3$};
          \draw (0,0) -- (3,0);
          \draw[blue] (0.3,0.5) to[bend left=20] (-0.9,-1.7);
          \node at (-1.2,-1.5) {$H_{2,2}$};
          \draw[blue] (1.3,0.5) to[bend left=20] (0.1,-1.7);
          \node at (-0.2,-1.5) {$H_{2,3}$};
          \draw[blue] (2.1,0.5) to[bend left=20] (2.1,-1.7);
          \node at (1.8,-1.5) {$H_{2,4}$};
          \node[below] at (0.5,0) {$\frac{t_1}{2}$};
          \node[below] at (1.5,0) {$\frac{t_1}{2}$};
          \node[below] at (2.7,0) {$1-t_1$};
          \end{scope}
          
          \begin{scope}[shift={(6,-1)}]
          \draw[fill]  (0,0) circle (0.05cm);
          \draw[fill]  (1,0) circle (0.05cm);
          \draw[fill]  (2,0) circle (0.05cm);
          \draw[fill]  (3,0) circle (0.05cm);
          \draw (0,0) -- (3,0);
          \node at (-0.3,0.0) {$f_4$};
          \node[above] at (2.5,0) {$\frac{t_1}{2}$};
          \node[above] at (0.7,0) {${1-t_1}$};
          \node[above] at (1.5,0) {$\frac{t_1}{2}$};
          \end{scope}
          
       \end{tikzpicture}
       \caption{Attributing lengths to the edges of $\Gamma'$ for $t_1>t_2$ and $k$ even. In this case, $v_{e_2}$ is contained in $H_{2,i}$ for every $i=1,\ldots, m_2$.}
       \label{fig:fi_sequence_t1>t2_even}
   \end{figure}
    
    We are left to consider the case in which  $\psi(E(H_{3,1},H_{3,1}^c))$ and  $\psi(E(H_{2,i},H_{2,i}^c))$ have a common edge for some $i$. In this case,  this edge must be $f_k$.
    
    We claim that $k$ is odd. First we prove that $H_{3,1}^c$ contains the vertices of $\Gamma'$ incident to $f_{2i}$ and the vertices over $f_{2i}$ for every $i$. Let us denote by $e_2,f, f_k$ the edges of $\psi(E(H_{3,1}, H_{3,1}^c))$.
     The intersection $H_{3,1}\cap H_{2,1}$ 
     cannot be a $2$-hemisphere, otherwise we would contradict the minimality of $H_{2,1}$. Indeed, the fact that 
     $E(v_{e_2},\{v_{e_2}\}^c)\cap E(H_{3,1},H_{3,1}^c)\neq \emptyset$ implies that 
     \[
     E(v_{e_2},\{v_{e_2}\}^c)\cap E(H_{3,1}\cap H_{2,1},(H_{3,1}\cap H_{2,1})^c)\neq \emptyset,
     \]
     so $H_{3,1}\cap H_{2,1}\subsetneqq H_{2,1}$.
    By Lemma \ref{lem:intersect23}, we see that   $\psi(E(H_{3,1}\cap H_{2,1}, (H_{3,1}\cap H_{2,1})^c))=\{e_2, f, f_1\}$, with $e_2$ and $f$ fully contained in $H_{2,1}$ and $f_2$ fully contained in $H_{3,1}^c$. 
     
     We now iterate the reasoning. Intersecting $H_{3,1}\cap H_{2,3}$ (see Figure \ref{fig:fi_sequence_t1<t2_odd}), we must have that $e_2, f\in \psi(E(H_{3,1}\cap H_{2,3},(H_{3,1}\cap H_{2,3})^c))$, hence, by Lemma \ref{lem:intersect23}, $H_{3,1}\cap H_{2,3}$ is a $3$-hemisphere, and $f_3$ is fully contained in $H_{3,1}$ (as neither $f_2$ nor $f_k$ is fully contained in $H_{3,1}$). Considering $H_{3,1}\cap H_{2,4}$, we have that $f_4$ must be fully contained in $H_{3,1}^c$, and iterating this process we see that $f_{2i}$ is fully contained in $H_3^c$ for every $i=1,\ldots, \lfloor \frac{k}{2}\rfloor$. So $k$ must be odd and we write $k=2k'+1$.\par
     As illustrated in Figure \ref{fig:fi_sequence_t1<t2_odd}, for every $e\in E(\G')$, we define:
            \[
\ell'(e)=
\begin{cases}
\begin{array}{ll}
  \frac{1-t_1}{2}   & \text{ if }e\in E(H_{2,3i+1},H_{2,3i+1}^c) \text{ or } E(H_{2,3i+2},H_{2,3i+2}^c)\text{ for some } i<k',
  \\ 
   t_1 & \text{ if } e\in E(H_{2,3i},H_{2,3i}^c) \text{ or } t(e)=v_1,\\
  1-t_2 &\text{ if } e\in E(H_{3,3i+1},H_{2,3i+1}^c) \text{ for any } i,\\
  t_2-t_1 &\text{ if } e\in E(H_{3,3i+2},H_{2,3i+2}^c) \text{ for any } i,\\
  t_1 &\text{ if } e\in E(H_{3,{3i}},H^c_{2,{3i}}) \text{ for any } i,\\
  1/3 & \text{ if } \psi(e)\notin \psi(E(H_{j,i},H^c_{j,i})) \text{ for any $j=2,3$ and $i$}.
\end{array}
\end{cases}
\]
     The remaining edges can be assigned lengths in a such way that $\sum_{e\in\psi^{-1}(f)}\ell'(e)=1$ for every $f\in E(\Gamma_0)$, so $(\Gamma',\ell')$ is a model of $X$. Again, by Remark \ref{rem:P_principal}, the divisors $\mathcal{P}_{H_{2,i}}$ and $\mathcal{P}_{H_{3,i}}$ are principal divisors, finishing the proof.
     
      \begin{figure}[ht]
       \begin{tikzpicture}[scale=1.2]
          \begin{scope}
          \draw[fill]  (0,0) circle (0.05cm);
          \node[above] at (1,0) {$v_{e_1}$};
          \draw[fill]  (1,0) circle (0.05cm);
          \draw[fill]  (2,0) circle (0.05cm);
          \draw[fill]  (3,0) circle (0.05cm);
          \node at (-0.3,0.0) {$f_1$};
          \draw (0,0) -- (3,0);
          \draw[blue] (1.1,0.5) to[bend left=20] (2.5,-1.7);
          \node at (2.1,-1.5) {$H_{2,1}$};
          \draw[blue] (2.1,0.5) to[bend left=20] (3.5,-1.7);
          \node at (3.1,-1.5) {$H_{2,2}$};
          \node[below] at (0.5,0) {$t_1$};
          \node[below] at (1.5,0) {$\frac{1-t_1}{2}$};
          \node[below] at (2.5,0) {$\frac{1-t_1}{2}$};
          \end{scope}
          
          \begin{scope}[shift={(2,-1)}]
          \draw[fill]  (0,0) circle (0.05cm);
          \draw[fill]  (1,0) circle (0.05cm);
          \draw[fill]  (2,0) circle (0.05cm);
          \draw[fill]  (3,0) circle (0.05cm);
          \draw (0,0) -- (3,0);
          \node at (-0.3,0.0) {$f_2$};
          \node[above] at (2.5,0) {$t_1$};
          \node[above] at (0.5,0) {$\frac{1-t_1}{2}$};
          \node[above] at (1.5,0) {$\frac{1-t_1}{2}$};
          \end{scope}
          
          \begin{scope}[shift={(4,0)}]
          \draw[fill]  (0,0) circle (0.05cm);
          \draw[fill]  (1,0) circle (0.05cm);
          \draw[fill]  (2,0) circle (0.05cm);
          \draw[fill]  (3,0) circle (0.05cm);
          \node at (-0.3,0.0) {$f_3$};
          \draw (0,0) -- (3,0);
          \draw[blue] (0.5,0.5) to[bend left=20] (0.5,-1.7);
          \node at (0.2,-1.5) {$H_{2,3}$};
          \draw[blue] (1.1,0.5) to[bend left=20] (2.5,-1.7);
          \node at (2.1,-1.5) {$H_{2,4}$};
          \draw[blue] (2.1,0.5) to[bend left=20] (3.5,-1.7);
          \node at (3.1,-1.5) {$H_{2,5}$};
          \node[below] at (0.5,0) {$t_1$};
          \node[below] at (1.5,0) {$\frac{1-t_1}{2}$};
          \node[below] at (2.5,0) {$\frac{1-t_1}{2}$};
          \end{scope}
          
          \begin{scope}[shift={(6,-1)}]
          \draw[fill]  (0,0) circle (0.05cm);
          \draw[fill]  (1,0) circle (0.05cm);
          \draw[fill]  (2,0) circle (0.05cm);
          \draw[fill]  (3,0) circle (0.05cm);
          \draw (0,0) -- (3,0);
          \node at (-0.3,0.0) {$f_4$};
          \node[above] at (2.5,0) {$t_1$};
          \node[above] at (0.5,0) {$\frac{1-t_1}{2}$};
          \node[above] at (1.5,0) {$\frac{1-t_1}{2}$};
          \end{scope}
          
          \begin{scope}[shift={(8,0)}]
          \draw[fill]  (0,0) circle (0.05cm);
          \draw[fill]  (1,0) circle (0.05cm);
          \draw[fill]  (2,0) circle (0.05cm);
          \draw[fill]  (3,0) circle (0.05cm);
          \node at (-0.3,0.0) {$f_5$};
          \draw (0,0) -- (3,0);
          \draw[blue] (0.5,0.5) to[bend left=20] (0.5,-1.7);
          \node at (0.2,-1.5) {$H_{2,6}$};
          \draw[green!40!black] (1.4,0.5) to[bend left=10] (0.1,-3.7);
          \node at (-0.2,-3.7) {$H_{3,1}$};
          \draw[green!40!black] (2.2,0.5) to[bend left=10] (0.8,-4.7);
          \node at (0.5,-4.5) {$H_{3,2}$};
          \node[below] at (0.5,0) {$t_1$};
          \node[below] at (1.7,0) {$1-t_2$};
          \node[below] at (2.75,0) {$t_2-t_1$};
          \end{scope}

          \begin{scope}[shift={(8.5,-2)}]
          \draw[fill]  (0,0) circle (0.05cm);
          \draw[fill]  (1,0) circle (0.05cm);
          \draw[fill]  (2,0) circle (0.05cm);
          \draw[fill]  (3,0) circle (0.05cm);
          \draw (0,0) -- (3,0);
          \node[below] at (2.6,0) {$t_1$};
          \node[below] at (0.65,0) {$1-t_2$};
          \node[below] at (1.75,0) {$t_2-t_1$};
          \end{scope}
          
          \begin{scope}[shift={(6.2,-3)}]
          \draw[fill]  (0,0) circle (0.05cm);
          \draw[fill]  (1,0) circle (0.05cm);
          \draw[fill]  (2,0) circle (0.05cm);
          \draw[fill]  (3,0) circle (0.05cm);
          \draw (0,0) -- (3,0);
          \node[above] at (2,0) {$v_{e_2}$};
          \node[below] at (1.5,0) {$t_1$};
          \node[below] at (2.65,0) {$1-t_2$};
          \node[below] at (0.5,0) {$t_2-t_1$};
          \end{scope}
          
          \begin{scope}[shift={(8.9,-4)}]
          \draw[fill]  (0,0) circle (0.05cm);
          \draw[fill]  (1,0) circle (0.05cm);
          \draw[fill]  (2,0) circle (0.05cm);
          \draw[fill]  (3,0) circle (0.05cm);
          \draw (0,0) -- (3,0);
          \node[below] at (1.5,0) {$t_1$};
          \node[below] at (2.5,0) {$1-t_2$};
          \node[below] at (0.65,0) {$t_2-t_1$};
          \end{scope}
       \end{tikzpicture}
       \caption{Attributing lengths to the edges of $\Gamma'$, for $t_1<t_2$ and $k$ odd. In this case, $v_{e_2}$ is contained in $H_{2,i}$ for every $i=1,\ldots, m_2$. }
       \label{fig:fi_sequence_t1<t2_odd}
   \end{figure}

   \begin{figure}[ht]
       \begin{tikzpicture}[scale=1.2]
          \begin{scope}
          \draw[fill]  (0,0) circle (0.05cm);
          \node[above] at (2,0) {$v_{e_1}$};
          \draw[fill]  (1,0) circle (0.05cm);
          \draw[fill]  (2,0) circle (0.05cm);
          \draw[fill]  (3,0) circle (0.05cm);
          \node at (-0.3,0.0) {$f_1$};
          \draw (0,0) -- (3,0);
          \draw[blue] (2.1,0.5) to[bend left=20] (2.1,-1.7);
          \node at (1.8,-1.5) {$H_{2,1}$};
          
          \node[below] at (0.5,0) {$\frac{t_1}{2}$};
          \node[below] at (1.5,0) {$\frac{t_1}{2}$};
          \node[below] at (2.7,0) {${1-t_1}$};
          \end{scope}
          
          \begin{scope}[shift={(2,-1)}]
          \draw[fill]  (0,0) circle (0.05cm);
          \draw[fill]  (1,0) circle (0.05cm);
          \draw[fill]  (2,0) circle (0.05cm);
          \draw[fill]  (3,0) circle (0.05cm);
          \draw (0,0) -- (3,0);
          \node at (-0.3,0.0) {$f_2$};
          \node[above] at (2.5,0) {$\frac{t_1}{2}$};
          \node[above] at (0.7,0) {${1-t_1}$};
          \node[above] at (1.5,0) {$\frac{t_1}{2}$};
          \end{scope}
          
          \begin{scope}[shift={(4,0)}]
          \draw[fill]  (0,0) circle (0.05cm);
          \draw[fill]  (1,0) circle (0.05cm);
          \draw[fill]  (2,0) circle (0.05cm);
          \draw[fill]  (3,0) circle (0.05cm);
          \node at (-0.3,0.0) {$f_3$};
          \draw (0,0) -- (3,0);
          \draw[blue] (0.3,0.5) to[bend left=20] (-0.9,-1.7);
          \node at (-1.2,-1.5) {$H_{2,2}$};
          \draw[blue] (1.3,0.5) to[bend left=20] (0.1,-1.7);
          \node at (-0.2,-1.5) {$H_{2,3}$};
          \draw[blue] (2.1,0.5) to[bend left=20] (2.1,-1.7);
          \node at (1.8,-1.5) {$H_{2,4}$};
          \node[below] at (0.5,0) {$\frac{t_1}{2}$};
          \node[below] at (1.5,0) {$\frac{t_1}{2}$};
          \node[below] at (2.7,0) {$1-t_1$};
          \end{scope}
          
          \begin{scope}[shift={(6,-1)}]
          \draw[fill]  (0,0) circle (0.05cm);
          \draw[fill]  (1,0) circle (0.05cm);
          \draw[fill]  (2,0) circle (0.05cm);
          \draw[fill]  (3,0) circle (0.05cm);
          \draw (0,0) -- (3,0);
          \node at (-0.3,0.0) {$f_4$};
          \node[above] at (2.5,0) {$\frac{t_1}{2}$};
          \node[above] at (0.7,0) {${1-t_1}$};
          \node[above] at (1.5,0) {$\frac{t_1}{2}$};
          \end{scope}
          
          \begin{scope}[shift={(8,0)}]
          \draw[fill]  (0,0) circle (0.05cm);
          \draw[fill]  (1,0) circle (0.05cm);
          \draw[fill]  (2,0) circle (0.05cm);
          \draw[fill]  (3,0) circle (0.05cm);
          \node at (-0.3,0.0) {$f_5$};
          \draw (0,0) -- (3,0);
          \draw[blue] (0.3,0.5) to[bend left=20] (-0.9,-1.7);
          \node at (-1.2,-1.5) {$H_{2,5}$};
          \draw[blue] (1.3,0.5) to[bend left=20] (0.1,-1.7);
          \node at (-0.2,-1.5) {$H_{2,6}$};
          
          \draw[green!40!black] (2.4,0.5) to[bend left=10] (-0.3,-3.7);
          \node at (-0.4,-3.8) {$H_{3,1}$};
          \node[below] at (0.5,0) {$\frac{t_1}{2}$};
          \node[below] at (1.5,0) {$\frac{t_1}{2}$};
          \node[below] at (2.7,0) {$1-t_1$};
          \end{scope}

          \begin{scope}[shift={(8.7,-2)}]
          \draw[fill]  (0,0) circle (0.05cm);
          \draw[fill]  (1,0) circle (0.05cm);
          \draw[fill]  (2,0) circle (0.05cm);
          \draw[fill]  (3,0) circle (0.05cm);
          \draw (0,0) -- (3,0);
          \draw[green!40!black] (1.5,0.5) to[bend left=10] (-0.8,-2.7);
          \node at (-1.1,-2.7) {$H_{3,2}$};
          \node[below] at (2.6,0) {$t_2$};
          \node[below] at (0.7,0) {$1-t_1$};
          \node[below] at (1.7,0) {$t_1-t_2$};
          
          \end{scope}
          
          \begin{scope}[shift={(7,-3)}]
          \draw[fill]  (0,0) circle (0.05cm);
          \draw[fill]  (1,0) circle (0.05cm);
          \draw[fill]  (2,0) circle (0.05cm);
          \draw[fill]  (3,0) circle (0.05cm);
          \draw (0,0) -- (3,0);
          
          \node[above] at (1,0) {$v_{e_2}$};
          \node[below] at (1.7,0) {$1-t_1$};
          \node[below] at (2.7,0) {$t_1-t_2$};
          \node[below] at (0.5,0) {$t_2$};
          \end{scope}
          
          \begin{scope}[shift={(8.4,-4)}]
          \draw[fill]  (0,0) circle (0.05cm);
          \draw[fill]  (1,0) circle (0.05cm);
          \draw[fill]  (2,0) circle (0.05cm);
          \draw[fill]  (3,0) circle (0.05cm);
          \draw (0,0) -- (3,0);
          \node[below] at (1.5,0) {$t_2$};
          \node[below] at (2.5,0) {$1-t_1$};
          \node[below] at (0.65,0) {$t_1-t_2$};
          \end{scope}
          
       \end{tikzpicture}
       \caption{Attributing lengths to the edges of $\Gamma'$ for $t_1>t_2$ and $k$ odd. In this case, $v_{e_2}$ is contained in $H_{2,i}$ for every $i=1,\ldots, m_2$.}
       \label{fig:fi_sequence_t1>t2_odd}
   \end{figure}

Case (2.b). Consider the refinement $\Gamma'$ of $\Gamma_0$ by adding one vertex over each edge. Notice that $\G'$ is a refinement of $\G$.    Let $H_{2,i}$ and $H_{3,j}$ be defined as in Equation \eqref{eq:H23_free}. Let $k$ be the integer such that $|\psi(E(H_{2,i},H_{2,i}^c))\cap\psi(E(H_{2,i+1},H_{2,i+1}^c))|=1$ for every $i\leq k-1$, and 
\[
\psi(E(H_{2,k+2i+1},H_{2,k+2i+1}^c))=\psi(E(H_{2,k+2i+2},H_{2,k+2i+2}^c)), \text{ for } i\ge 0.
\]
 If $k$ is even, we define the length $\ell'$ on $\Gamma'$ as follows:
\[
\ell'(e)=
\begin{cases}
\begin{array}{ll}
  1-t_1   & \text{ if }e\in E(H_{2,2i+1},H_{2,2i+1}) \text{ with } i=0,\dots,\lfloor\frac{m_2-1}{2}\rfloor 
  \\ 
   t_1 & \text{ if } e\in E(H_{2,2i},H_{2,2i}^c)  \text{ with } i=1,\dots,\lfloor\frac{m_2}{2}\rfloor  \text{ or } t(e)=v_1\\
  1-t_1 &\text{ if } e\in E(H_{3,2i},H_{3,2i}^c) \text{ with } i=1,\dots,\lfloor\frac{m_3}{2}\rfloor\\
  t_1 &\text{ if } e\in E(H_{3,2i+1},H_{3,2i+1}^c) \text{ with } i=0,\dots,\lfloor\frac{m_3-1}{2}\rfloor \\
  1/2 & \text{ if } \psi(e)\notin \psi(E(H_{r,i},H_{r,i}^c) \text{ for any $r=2,3$ and $i$}.
\end{array}
\end{cases}
\]
The remaining edges can be assigned lengths in a such way that $\sum_{e\in\psi^{-1}(f)} \ell'(e)=1$ for every $f\in E(\Gamma_0)$, so $(\Gamma',\ell')$ is a model of $X$. 
   
When $k$ is odd the situations is similar (see Figure \ref{fig:2b}): the unique difference is that we define $\ell'(e)=1-t_1$ for $e\in E(H_{3,2i+1},H_{3,2i+1}^c)$ and $\ell'(e)=t_1$ for $e\in E(H_{3,2i},H_{3,2i}^c)$.
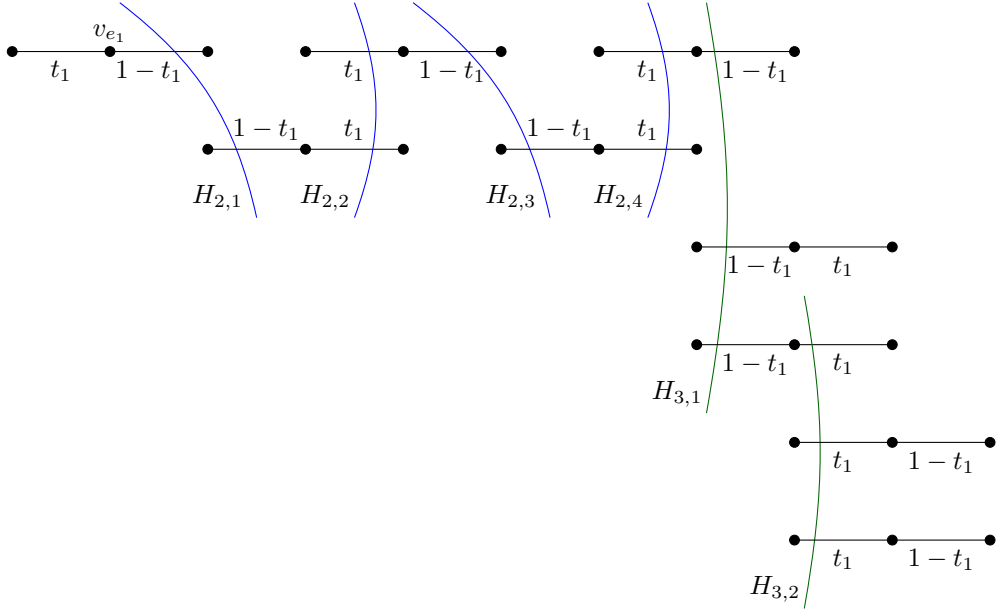
\begin{figure}[ht]
       \begin{tikzpicture}[scale=1.3]
          \begin{scope}
          \draw[fill]  (0,0) circle (0.05cm);
          \node[above] at (1,0) {$v_{e_1}$};
          \draw[fill]  (1,0) circle (0.05cm);
          \draw[fill]  (2,0) circle (0.05cm);
          \draw (0,0) -- (2,0);
          \draw[blue] (1.1,0.5) to[bend left=20] (2.5,-1.7);
          \node at (2.1,-1.5) {$H_{2,1}$};
          \node[below] at (0.5,0) {$t_1$};
          \node[below] at (1.4,0) {$1-t_1$};
          \end{scope}
          
          \begin{scope}[shift={(2,-1)}]
          \draw[fill]  (0,0) circle (0.05cm);
          \draw[fill]  (1,0) circle (0.05cm);
          \draw[fill]  (2,0) circle (0.05cm);
          \draw (0,0) -- (2,0);
          \node[above] at (1.5,0) {$t_1$};
          \node[above] at (0.6,0) {$1-t_1$};
          \end{scope}
          
          \begin{scope}[shift={(3,0)}]
          \draw[fill]  (0,0) circle (0.05cm);
          \draw[fill]  (1,0) circle (0.05cm);
          \draw[fill]  (2,0) circle (0.05cm);
          \draw (0,0) -- (2,0);
          \draw[blue] (0.5,0.5) to[bend left=20] (0.5,-1.7);
          \node at (0.2,-1.5) {$H_{2,2}$};
          \draw[blue] (1.1,0.5) to[bend left=20] (2.5,-1.7);
          \node at (2.1,-1.5) {$H_{2,3}$};
          \node[below] at (0.5,0) {$t_1$};
          \node[below] at (1.5,0) {$1-t_1$};
          \end{scope}
          
          \begin{scope}[shift={(5,-1)}]
          \draw[fill]  (0,0) circle (0.05cm);
          \draw[fill]  (1,0) circle (0.05cm);
          \draw[fill]  (2,0) circle (0.05cm);
          \draw (0,0) -- (2,0);
          \node[above] at (1.5,0) {$t_1$};
          \node[above] at (0.6,0) {$1-t_1$};
          \end{scope}
          
          \begin{scope}[shift={(6,0)}]
          \draw[fill]  (0,0) circle (0.05cm);
          \draw[fill]  (1,0) circle (0.05cm);
          \draw[fill]  (2,0) circle (0.05cm);
          \draw (0,0) -- (2,0);
          \draw[blue] (0.5,0.5) to[bend left=20] (0.5,-1.7);
          \node at (0.2,-1.5) {$H_{2,4}$};
          \draw[green!40!black] (1.1,0.5) to[bend left=10] (1.1,-3.7);
          \node at (0.8,-3.5) {$H_{3,1}$};
          \node[below] at (0.5,0) {$t_1$};
          \node[below] at (1.6,0) {$1-t_1$};
          \end{scope}

          \begin{scope}[shift={(7,-2)}]
          \draw[fill]  (0,0) circle (0.05cm);
          \draw[fill]  (1,0) circle (0.05cm);
          \draw[fill]  (2,0) circle (0.05cm);
          \draw (0,0) -- (2,0);
          \node[below] at (1.5,0) {$t_1$};
          \node[below] at (0.65,0) {$1-t_1$};
          \end{scope}
          
          \begin{scope}[shift={(7,-3)}]
          \draw[fill]  (0,0) circle (0.05cm);
          \draw[fill]  (1,0) circle (0.05cm);
          \draw[fill]  (2,0) circle (0.05cm);
          \draw (0,0) -- (2,0);
          \node[below] at (1.5,0) {$t_1$};
          \node[below] at (0.6,0) {$1-t_1$};
          \draw[green!40!black] (1.1,0.5) to[bend left=10] (1.1,-2.7);
          \node at (0.8,-2.5) {$H_{3,2}$};
          \end{scope}
          
          \begin{scope}[shift={(8,-4)}]
          \draw[fill]  (0,0) circle (0.05cm);
          \draw[fill]  (1,0) circle (0.05cm);
          \draw[fill]  (2,0) circle (0.05cm);
          \draw (0,0) -- (2,0);
          \node[below] at (0.5,0) {$t_1$};
          \node[below] at (1.5,0) {$1-t_1$};
          \end{scope}
          
          \begin{scope}[shift={(8,-5)}]
          \draw[fill]  (0,0) circle (0.05cm);
          \draw[fill]  (1,0) circle (0.05cm);
          \draw[fill]  (2,0) circle (0.05cm);
          \draw (0,0) -- (2,0);
          \node[below] at (0.5,0) {$t_1$};
          \node[below] at (1.5,0) {$1-t_1$};
          \end{scope}
       \end{tikzpicture}
       \caption{Attributing lengths to the edges of $\Gamma'$ for $k$ odd.}
       \label{fig:2b}
   \end{figure}
   
   As in Case (2.a), we have that the combinatorial type of the divisor $\mathcal{D}_{t_1,t_2}$ defined in Equation \eqref{eq:graph} does not depend on $0<t_1,t_2<1$. Hence Theorem \ref{thm:tropgeoAbel}  (3) ensures that the Abel map $\alpha^2_{\mc O_{\C}}$ is already defined at $(N_1,N_2)$, as given by the global blowup in the statement of Theorem \ref{thm:Abel2}.\\

Case (2.c). This case is the same as Case (2.b) except that $\psi(E(H_{2,i},H^c_{2,i})\cap \psi(E(H_{3,j},H^c_{3,j})=\emptyset$ for every $i=1,\ldots,m_2$ and $j=1,\ldots, m_3$. So we can freely assign lengths to the edges in $E(H_{3,j},H^c_{3,j})$. The conclusion is the same as in Case (2.b).

\bigskip
Case (3.a). This case follows the same steps in Case (2.a): the difference is that $k=0$ and the sequence of edges $f_1,\ldots, f_k$ is empty. The conclusion is the same as in Case (2.a).

\medskip

Case (3.b). This case follows the same steps in Case (2.b): the difference is that $k=0$. The conclusion is the same as in Case (2.b).

\medskip

Case (3.c). In this case, we do not have to further refine $\Gamma_0$ as  $E(H,H^c)$ does not contain any edge incident to $v_{e_1}$ or $v_{e_2}$, for every $H$ in $\mc F_{\Gamma}(v_0,v_{e_1},v_{e_2})$. So $P_H$ is principal on $X$ for every $H\in \mc F_{\Gamma}(v_0,v_{e_1},v_{e_2})$.
As in the previous cases, using Remark \ref{prop:quasiquasi} and Theorem \ref{thm:convert}, the divisor
\[
\mathcal{D}_{t_1,t_2}:=2p_0-p_{e_1,t_1}-p_{e_2,t_2}+\sum_{H\in \mc F_{\Gamma}(v_0,v_{e_1},v_{e_2})}\mc P_H
\]
on the tropical curve $X$  is $(p_0,\mu)$-quasistable and equivalent to $2p_0-p_{e_1,t_1}-p_{e_2,t_2}$. Since the combinatorial type of $\mathcal{D}_{t_1,t_2}$ is independent of $t_1$ and $t_2$, it follows from Theorem \ref{thm:tropgeoAbel} (3)  that the Abel map $\alpha^2_{\mc O_{\C}}$ is already defined at $(N_1,N_2)$, as prescribed by the global blowup in the statement of Theorem \ref{thm:Abel2}. 
\end{proof}

Given a regular smoothing $f\col\mathcal C\ra B$ of a curve, consider the blowup $\wt \C\ra \C$ giving rise to a resolution of the degree-2 Abel map $\alpha^2_{\mathcal O_{\mathcal C}}$, as in Theorem \ref{thm:Abel2}. Since the locus we are blowing up is invariant under the natural action of $S_2$ on $\mathcal{C}^2$, we can take the quotient
\[
\Sym^2(\wt{\mathcal C})=\wt \C/S_2.
\]
We thus obtain a map:
\begin{equation}\label{eq:beta2}
\beta_2\col \Sym^2(\wt{\mathcal C})\ra \overline{\mc J}_\mu(\mathcal C)
\end{equation}
resolving the rational ``symmetrized" Abel map $\Sym(\C^2)\dashrightarrow \overline{\mc J}_\mu(\mathcal C)$.

\begin{Def}\label{def:pseudo}
Let $C$ be a curve. We say that $C$ is \emph{pseudo-hyperelliptic} if it has a simple torsion-free rank-1 sheaf $I$ of degree 2 with non-negative degree over every component  such that $h^0(C,I)\ge 2$.
\end{Def}

Recall that a curve is weakly-hyperelliptic if it has a degree-$2$ balanced invertible sheaf (see \cite{Capohyper} for more details). If a stable curve is hyperelliptic, then it is weakly-hyperelliptic.

\begin{Thm}\label{thm:hyperelliptic}
Let $C$ be a curve with no separating nodes. The following properties hold.
\begin{enumerate}
    \item $C$ is pseudo-hyperelliptic
if and only if, for some (every) regular smoothing $\C\to B$ of $C$, the map $\beta_2\col \Sym^2(\C)\to \overline{\J}_{\mu}(\C)$ is not injective.
\item if $C$ is stable and weakly-hyperelliptic, then $C$ is pseudo-hyperelliptic.
\item if $C$ is stable and has a simple torsion-free rank-1 sheaf $I$ of degree 2 with non-negative degree over every component  such that $h^0(C,I)\ge 2$, then $I$ is invertible.
\end{enumerate}
\end{Thm}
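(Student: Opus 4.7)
My plan is to prove the three items in order, exploiting their interdependencies: item (3) feeds the non-trivial direction of (1) in the stable case, and item (2) uses (3) to pass from invertible balanced sheaves to pseudo-hyperellipticity.

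For item (1), the key dictionary is that on pairs of generic smooth points of $C$, the map $\beta_2$ sends $\{Q_1,Q_2\}$ to the $(\sigma,\mu)$-quasistable representative of $\mc O_C(2P_0-Q_1-Q_2)$; two such unordered pairs coincide under $\beta_2$ exactly when $\mc O_C(Q_1+Q_2)\cong\mc O_C(Q_1'+Q_2')$ as sheaves on $C$. Setting $I:=\mc O_C(Q_1+Q_2)$ gives an invertible simple degree-$2$ sheaf with $h^0(I)\geq 2$ and non-negative degree on each component (the $Q_i$ being isolated smooth points), establishing the implication ``$\beta_2$ not injective $\Rightarrow$ $C$ pseudo-hyperelliptic''. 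For the converse, starting from a pseudo-hyperelliptic $I$, I would use two linearly independent sections of $I$ to produce two distinct effective ``divisors'' on $C$; the combinatorics of $2$-tails and $3$-tails in Theorem \ref{thm:Abel2} parametrizes how such divisors lift to points of $\Sym^2(\wt{\mc C})$, even in the case where sections pass through nodes. Since $\wt{\mc C}$ is determined by $C$ alone, the equivalence of ``some'' and ``every'' smoothing is immediate. The main obstacle I expect lies precisely here: tracking lifts of pairs of effective divisors (possibly passing through nodes) requires a delicate case-by-case matching with the blow-up combinatorics of Theorem \ref{thm:Abel2}, reconciling the quasistabilization step with how sections of $I$ are seen as divisors on the blow-up side.

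For item (2), let $L$ be a balanced degree-$2$ invertible sheaf on the stable $C$ with $h^0(L)\geq 2$. Invertibility makes $L$ automatically simple and torsion-free of rank $1$, so only non-negativity of the multidegree needs checking. I would argue by contradiction: if $\deg_Z L < 0$ for some component $Z$, every global section of $L$ vanishes identically on $Z$, so $H^0(L)=H^0(L(-Z))$. Iterating on the twisted sheaf and exploiting both the balanced bound and the absence of separating nodes forces $h^0(L) < 2$, contradicting the hypothesis. Hence all component degrees of $L$ are non-negative, and $L$ exhibits $C$ as pseudo-hyperelliptic.

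For item (3), I argue by contradiction. Suppose $I$ is a non-invertible simple torsion-free rank-$1$ sheaf as in the statement, and write $I=\nu_*L$ with $\nu\col\wt C\to C$ the partial normalization at the non-invertibility locus $\Sigma=\{N_1,\ldots,N_k\}$, $k\geq 1$, and $L$ a line bundle on $\wt C$. The absence of separating nodes of $C$ makes each $N_i$ non-separating, so $\wt C$ is connected of arithmetic genus $g_C-k$, and $\deg L=2+k$. The multidegree of $L$ coincides component-wise with that of $I$, hence is non-negative. The hypothesis $h^0(L)=h^0(I)\geq 2$ combined with $\deg L=2+k$ and the component-wise non-negativity forces, via Riemann--Roch on $\wt C$, sections of $L$ that take matching values at both branches of each $N_i$; such sections descend through $\nu$ to sections of a line bundle on $C$, contradicting genuine non-invertibility of $I$ at each $N_i$.
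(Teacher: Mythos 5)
There are genuine gaps in all three items, and the overall route diverges from what actually makes the theorem work. The engine of the paper's proof is Caporaso's structure theorem for degree-$2$ sheaves with two sections (\cite[Theorem 5.9]{Capohyper}) applied to the semistable model $\mathbb{P}(I)\to C$ and $L=\mathcal{O}_{\mathbb{P}(I)}(1)$; that single input delivers item (3), item (2), and the forward (``pseudo-hyperelliptic $\Rightarrow$ $\beta_2$ not injective'') direction of (1) all at once. You have no substitute for it: for that forward direction you explicitly concede you do not know how to turn two independent sections of $I$ into two distinct points of $\Sym^2(\widetilde{\mathcal C})$ with the same image (sections of a degree-$2$ sheaf on a nodal curve can vanish along whole components or at nodes, which is exactly what the structure theorem controls), so that implication is simply not proved.

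Two further concrete errors. First, in the direction ``$\beta_2$ not injective $\Rightarrow$ pseudo-hyperelliptic'' you assert that $\beta_2(\{Q_1,Q_2\})=\beta_2(\{Q_1',Q_2'\})$ is equivalent to $\mathcal{O}_C(Q_1+Q_2)\cong\mathcal{O}_C(Q_1'+Q_2')$. This is false: equality in $\overline{\mathcal{J}}^\sigma_\mu$ for a family only gives $\mathcal{O}_C(2P_0-Q_1-Q_2)\cong\mathcal{O}_C(2P_0-Q_1'-Q_2')\otimes T$ for a twister $T=\mathcal{O}_{\mathcal C}(\sum a_iC_i)|_C$. Producing a second section of $\mathcal{O}_C(Q_1+Q_2)$ then requires the dual-graph analysis the paper carries out (taking the subcurve $Z$ on which the rational function with $\operatorname{div}(f)=v_1'+v_2'-v_1-v_2$ is minimal and exhibiting a section vanishing on $Z^c$); without it you cannot conclude $h^0\geq 2$, and you also omit the cases where the identified points of $\Sym^2(\widetilde{\mathcal C})$ lie over nodes, which produce the genuinely non-invertible $I$'s. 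Second, in item (3) your degree bookkeeping is wrong: if $\nu\colon\widetilde C\to C$ is the partial normalization at the $k$ non-invertibility nodes and $I=\nu_*L$, then $\deg L=2-k$, not $2+k$ (the value $2+k$ is the degree of $\mathcal{O}_{\mathbb{P}(I)}(1)$ on the semistable modification, where each exceptional $\mathbb{P}^1$ carries degree $1$ --- you have conflated the two models, as your genus count $g_C-k$ shows). Moreover the intended contradiction --- ``sections take matching values at the branches, hence descend to a line bundle, contradicting non-invertibility'' --- is not a contradiction at all: $I$ is a fixed sheaf, and the behaviour of its sections at the branches cannot change whether $I$ is locally free. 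The paper instead derives invertibility from the classification: in Caporaso's first case $L$ has degree $0$ on every component but one non-exceptional $C_0$, and in the second case an exceptional member of the special pair would force the other to be exceptional too, contradicting simplicity of $I$.
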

\begin{proof}
If $C$ has a rational component $E$ such that $|E\cap E^c|\leq 2$ then it is easy to see that $C$ is pseudo-hyperelliptic and weakly-hyperellpitic, and that $\beta_2$ is not injective. So, we will assume that $C$ is stable.

 Assume that $C$ is stable and pseudo-hyperelliptic. Let $I$ be a torsion free rank-1 sheaf satisfying the condition in Definition \ref{def:pseudo}. Let $\mathbb P(I):=\Proj(\Sym(I))\to C$ be the semistable modification of $C$ where we add a rational curve over the nodes of $C$ where $I$ is not locally free. We consider the invertible sheaf $L:=\mathcal O_{\mathbb P(I)}(1)$, so that we have $I=f_*(L)$ and $L$ has degree $1$ on the exceptional components (see \cite[Section 5]{EP}). Then, $L$ has non-negative degree on every component of $\mathbb{P}(I)$ and $h^0(\mathbb{P}(I),L)\geq 2$.

We will apply \cite[Theorem 5.9]{Capohyper} to $\mathbb{P}(I)$ and $L$.  We have two cases. 
In the first case, there is a component $C_0$ of $\mathbb{P}(I)$ satisfying the following property. Let $Z_1,\dots Z_n$ be the connected components of $C_0^c$. Then
\begin{equation}
\label{eq:weakly_hyper}
    h^0(L|_{C_0})\geq 2,\;\;\;  L|_{C_0^c}=\mathcal{O}_{C_0^c}, \;\;\;L|_{C_0}=\mathcal{O}_{C_0}(C_0\cap Z_i),\;\;\; |C_0\cap Z_i|=2.
\end{equation}
This means that the component $C_0$ is not exceptional, because $L$ has degree $2$ on $C_0$. Moreover, $L$ has degree $0$ on every other component, which implies that $I$ is an invertible sheaf, hence $L=I$ and $\mathbb{P}(I)=C$. 
We can consider smooth points $q_1,q_2,q_1',q_2'$ of $C$ lying over $C_0$ such that 
\[
L|_{C_0}\cong\mathcal{O}_{C_0}(q_1+q_2)\cong\mathcal{O}_{C_0}(q_1'+q_2').
\]
By Condition \eqref{eq:weakly_hyper}, we have that $L\cong\mathcal{O}_C(q_1+q_2)\cong\mathcal{O}_C(q'_1+q'_2)$, hence $\mathcal{O}_C(2p_0-q_1-q_2)\cong\mathcal{O}_C(2p_0-q_1'-q_2')$ which means that $\beta_2(q_1+q_2)=\beta_2(q_1'+q_2')$, where $\beta_2$ is the map in Equation \ref{eq:beta2} for some (every) regular smoothing of $C$.

In the second case, there are two components $C_1$ and $C_2$ of $\mathbb{P}(I)$ such that $(C_1,C_2)$ is a special $\mathcal B$-pair (in the sense of \cite[Definition 5.8]{Capohyper}).  By \cite[Theorem 5.9]{Capohyper}, we have
\[
\deg L_{C_1}=\deg L|_{C_2}=1,\;\;\; L|_{(C_1\cup C_2)^c}\cong \mathcal O_{(C_1\cup C_2)^c}.
\]
Notice that, if one between $C_1$ and $C_2$ is exceptional, then the other must be exceptional as well, and in particular this implies that $I$ is not simple, which is a contradiction. We deduce that $I$ is an invertible sheaf, hence $L=I$ and $\mathbb{P}(I)=C$.
We can repeat the argument used in the first case, now taking $q_1,q_1'\in C_1$ and $q_2,q_2'\in C_2$. We leave the details  to the reader. Notice that we proved (3) and the ``only if" part of (1).

Now assume that there is a regular smoothing $\mathcal C\ra B$ of $C$ such that $\beta_2$ is not injective. We have different cases to consider.

In the first case, there are smooth points $q_1,q_2,q'_1,q'_2$ of $C$ such that $\beta_2(q_1+q_2)=\beta_2(q_1'+q_2')$. This means that there exists an invertible sheaf $T$ on $C$ of type $T=\mathcal O_{\C}(\sum a_i C_i)|_C$, where $a_i\in \mathbb Z$ and $C_i$ are the components of $C$, such that 
\[
\mathcal{O}_C(2p_0-q_1-q_2)\cong \mathcal{O}_C(2p_0-q_1'-q_2')\otimes T.
\]
We deduce that
\[
\mathcal{O}_C(q_1'+q_2'-q_1-q_2)\cong T.
\]
Let $\Gamma$ be the dual graph of $C$. Notice that $\Gamma$ has no separating edge. 
If $v_1,v_2,v_1',v_2'$ are the vertices of $\Gamma$ corresponding to the components containing the points $q_1,q_2, q_1',q_2'$, we have that $v_1'+v_2'-v_1-v_2$ is a principal divisor on $\Gamma$. Let $f\col V(\Gamma)\to \mathbb{Z}$ be the rational function on $\Gamma$ such that $\div(f)=v_1'+v_2'-v_1-v_2$ (notice that $v'_i\neq v_j$ for every $i,j=1,2$ because $\Gamma$ has no separating edge). We denote by $Z$ the subcurve of $C$ corresponding to the vertices of $\Gamma$ where $f$ attains its minimum. In particular, $q_1,q_2\in Z$, $q_1',q_2'\in Z^c$ and $|Z\cap Z^c|=2$. Moreover we have $T|_Z=\mathcal{O}_Z(-Z\cap Z^c)$, which implies that 
\[
\mathcal{O}_Z(Z\cap Z^c)\otimes\mathcal O_C(-q_1-q_2)|_Z\cong (\mathcal{O}_C(q_1'+q_2'-q_1-q_2)\otimes T^{-1})|_{Z}\cong \mathcal{O}_C|_Z=\mathcal{O}_Z.
\]
Define $L:=\mathcal{O}_C(q_1+q_2)$. We see that $L$ satisfies $h^0(L,C)\geq 2$ (indeed $L$ has the trivial section that vanishes only over $q_1,q_2$ and a section that vanishes on the whole $Z^c$). Thus $C$ is pseudo-hyperellitic in the sense of Definition \ref{def:pseudo}.

 In the second case, we have nodes $n,n'$ and smooth points  $q,q'$ of $C$ such that $\beta_2(n+q)=\beta_2(n'+q')$. Let $\wt C$ be the semistable modification of $C$ obtained by adding an exceptional component over each node of $C$. Then, there exists a twister $T$ on $\wt C$ such that 
 \[
 \mathcal{O}_{\wt{C}}(2p_0-\wt{n}-q)\cong \mathcal{O}_{\wt{C}}(2p_0-\wt{n}'-q')\otimes T
 \]
 where $\wt n$ and $\wt n'$ are any smooth points of $\widetilde C$ lying over the exceptional component over $n$ and $n'$.
 Arguing as before, we see that $L:=\mathcal{O}_{\wt{C}}(\wt{n}+q)$ satisfies $h^0(L,\wt{C})\geq 2$, and hence $h^0(f_*(L),C)\geq2$. Thus $C$ is pseudo-hyperelliptic in the sense of Definition \ref{def:pseudo}.
 
 In the third case, we have a node $n$ and smooth points $q,q'_1,q'_2$ such that $\beta_2(n+q)=\beta_2(q'_1+q'_2)$. This case is not possible, since the sheaf represented by  $\beta_2(q'_1+q'_2)$ is invertible, while the one represented by $\beta_2(n+q)$ is not.


The remaining cases are the following ones:
\begin{itemize}
    \item $\beta(n_1+n_2)=\beta(q'_1+q'_2)$;
    \item $\beta(n_1+n_2)=\beta(n'+q')$;
    \item $\beta (n_1+n_2)=\beta (n_1'+n_2')$.
\end{itemize}
where $n_1$, $n_2$, $n'$, $n'_1$, $n'_2$ are nodes of $C$, and $q$, $q'_1$, $q'_2$ and smooth points.
All these cases are done in an similar manner as the second case: first, we change $C$ by a suitable semistable modification $f\col \widetilde{C}\to C$ and find a line bundle $L$ such that $h^0(L,\widetilde{C})\geq 2$. This implies that $h^0(f_*(L),C)\geq 2$ which proves that $C$ is pseudo-hyperelliptic in the sense of Definition \ref{def:pseudo}.

Finally, item (2) of the statement readily follows  by \cite[Theorem 5.9]{Capohyper}.
\end{proof}

\bibliographystyle{abbrv}
\bibliography{bibliography}

\end{document}